\providecommand{\U}[1]{\protect\rule{.1in}{.1in}}
\newtheorem{thm}{Theorem}[section]
\newtheorem{defn}{Definition}[section]
\newtheorem{lemma}{Lemma}[section]
\newtheorem{cor}{Corollary}[section]
\newtheorem{rmk}{Remark}[section]
\numberwithin{equation}{section}
\begin{document}
\title{maximal hypersurfaces over exterior domains}
\author{Guanghao Hong}
\address{School of Mathematics and Statistics, Xi'an Jiaotong University, Xi'an,
P.R.China 710049.}
\email{ghhongmath@xjtu.edu.cn}
\author{Yu Yuan}
\address{Department of Mathematics, University of Washington, Seattle, WA 98195, USA.}
\email{yuan@math.washington.edu}

\begin{abstract}
In this paper, we study the exterior problem for the maximal surface equation.
We obtain the precise asymptotic behavior of the exterior solution at
infinity. And we prove that the exterior Dirichlet problem is uniquely
solvable given admissible boundary data and prescribed asymptotic behavior at infinity.

\end{abstract}
\date{\today}
\maketitle

%\subjclass[2010]{Primary ; Secondary }

%\keywords{maximal surface, asymptotic behavior, exterior  Dirichlet
%problem, gradient estimate}

\section{Introduction}

The maximal surface equation is
\begin{equation}
div(\frac{Du}{\sqrt{1-|Du|^{2}}})=0, \label{Ediv}%
\end{equation}
or equivalently in the non-divergence form
\begin{equation}
\triangle u+\frac{(Du)^{^{\prime}}D^{2}uDu}{1-|Du|^{2}}=0. \label{Endiv}%
\end{equation}
This equation arises as the Euler equation of the variational problem that
maximize the area functional $\int\sqrt{1-|Du|^{2}}$ among the spacelike
hypersurfaces in the Lorentz-Minkowski space $\mathbb{L}^{n+1}$ (see the
definitions in section 2). The graph of a solution to (\ref{Ediv}) is called a
maximal hypersurface and the graph of a solution to the variational problem is
called an area maximizing hypersurface.

Calabi [Ca68] ($n\leq4$) and Cheng-Yau [CY76] (all dimensions) proved that
every entire maximal hypersurface in $\mathbb{L}^{n+1}$ or every global
solution $u$ to the maximal surface equation (\ref{Ediv}) with $\left\vert
Du\left(  x\right)  \right\vert <1$ on $\mathbb{R}^{n}$ must be linear.

The Dirichlet problem for bounded domain was studied by Bartnik-Simon [BS82]
and the isolated singularity problem was studied by Ecker [Ec86]. The exterior
problem is a \textquotedblleft complimentary\textquotedblright\ one for
elliptic equations; see for example [Be51][Si87] for minimal hypersurfaces,
[CL03] for Monge-Ampere equation, [LLY17] for special Lagrangian equation and
[HZ18] for infinity harmonic functions, besides the classic works such as
[GS56] for linear ones. We study the exterior problem for the maximal surface
equation in this paper. We obtain the precise asymptotic behavior of the
exterior solution at infinity. And we prove that the exterior Dirichlet
problem is uniquely solvable.

Throughout the paper, we assume $A\subset\mathbb{R}^{n}$ be a bounded closed
set. We say $u$ is an exterior solution in $\mathbb{R}^{n}\backslash A$ if
$u\in C^{2}(\mathbb{R}^{n}\backslash A)$ with $|Du(x)|<1$ solve the equation
(\ref{Ediv}) in $\mathbb{R}^{n}\backslash A$. Given an exterior solution $u,$
for any bounded $C^{1}$ domain $U\supset A,$ the integral $Res[u]:=\int%
_{\partial U}\frac{\partial u/\partial\vec{n}}{\sqrt{1-|Du|^{2}}}d\sigma$ is
independent of the choices of $U$ because of the divergence structure of the
equation. The number $Res[u]$ can be regarded as the residue of the exterior
solution $u.$

\begin{thm}
Let $u$ be a smooth exterior solution in $\mathbb{R}^{n}\backslash A$ with
$A\ $being bounded. Then there exist a vector $a\in\mathring{B}_{1}$ and a
constant $c\in\mathbb{R}$ such that for $n=2$
\begin{align}
u(x)  &  =a\cdot x+(1-|a|^{2})Res[u]\ln\sqrt{|x|^{2}-(a\cdot x)^{2}%
}+c\nonumber\\
&  +Res[u]|a|\sqrt{1-|a|^{2}}\frac{|x|(a\cdot x)}{|x|^{2}-(a\cdot x)^{2}}%
\cdot\frac{\ln|x|}{|x|}+O_{k}(|x|^{-1}) \label{Asymptote2d}%
\end{align}
and for $n\geq3$
\begin{equation}
u(x)=a\cdot x+c-(1-|a|^{2})Res[u](\sqrt{|x|^{2}-(a\cdot x)^{2}})^{2-n}%
+O_{k}(|x|^{1-n}) \label{Asymptote3d}%
\end{equation}
as $|x|\rightarrow\infty$ for all $k=0,1,2,\cdots$. The notation
$\varphi(x)=O_{k}(|x|^{m})$ means that $|D^{k}\varphi(x)|=O(|x|^{m-k})$.
\end{thm}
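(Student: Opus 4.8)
The plan is to reduce---by a blow-down and a Lorentz rotation---to the case where $Du$ tends to $0$ at infinity, to carry out the asymptotic analysis in that normalized frame in which the equation is a small perturbation of Laplace's equation, and finally to transform back. The first point is uniform ellipticity at infinity. The rescalings $v_R(x)=R^{-1}u(Rx)$ are $1$-Lipschitz area-maximizing graphs on $B_2\setminus B_{2r_0/R}$ (with $A\subset B_{r_0}$), so a subsequence converges in $C^0_{\mathrm{loc}}(\mathbb R^n\setminus\{0\})$ to a $1$-homogeneous area-maximizing cone; by the Bartnik-Simon regularity theory [BS82] this cone is smooth and strictly spacelike away from the vertex (a cone with a lightlike part is not area-maximizing), and near such a nondegenerate limit the convergence is $C^\infty_{\mathrm{loc}}$, whence $\limsup_{|x|\to\infty}|Du(x)|<1$. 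So $u$ solves the uniformly elliptic equation $a^{ij}(Du)D_{ij}u=0$, $a^{ij}(p)=\delta^{ij}+p_ip_j/(1-|p|^2)$, for $|x|$ large; interior Schauder estimates at scale $|x|/2$, with $\operatorname{osc}_{B_{|x|/2}(x)}u\le|x|$, give $|D^ju(x)|\le C_j|x|^{1-j}$ for $j\ge1$; and since each $D_eu$ is a bounded solution of a uniformly elliptic equation with bounded coefficients, running Harnack's inequality on dyadic annuli (a Phragm\'en--Lindel\"of argument) shows $\max_{|x|=R}D_eu$ and $\min_{|x|=R}D_eu$ tend to a common limit, so $Du\to a$ for some $a\in\mathring B_1$ and $u(x)=a\cdot x+o(|x|)$. (In fact $v_\infty=a\cdot x$, by Ecker's removable-singularity theorem [Ec86] and the Bernstein theorem of Calabi and Cheng-Yau.)

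Next I would remove the linear part with the Lorentz boost $L$ of $\mathbb L^{n+1}$ in the $(a,e_{n+1})$-plane of velocity $|a|$. Since $|a|<1$, $L$ carries the graph of $u$ to the graph of an exterior solution $\tilde u$ over $\mathbb R^n\setminus\tilde A$ with $u(x)=a\cdot x+\sqrt{1-|a|^2}\,\tilde u(x')$, where $x\mapsto x'$ is a diffeomorphism of the exterior regions, asymptotically the linear map contracting the $a$-direction by $\sqrt{1-|a|^2}$; in particular $|x'|^2=|x|^2-(a\cdot x)^2+o(|x|^2)$. Because $L$ sends the asymptotic (timelike) normal $(-a,1)/\sqrt{1-|a|^2}$ of the graph of $u$ to $(0,\dots,0,1)$, $\tilde u$ is sublinear, and the Phragm\'en--Lindel\"of argument applied to $D_e\tilde u$ now yields $D\tilde u\to 0$. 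Transforming the conserved flux gives $Res[\tilde u]=\sqrt{1-|a|^2}\,Res[u]$.

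In the normalized frame, write $\Delta\tilde u=f$ with $f=-(a^{ij}(D\tilde u)-\delta^{ij})D_{ij}\tilde u$, so $|f|\le C|D\tilde u|^2|D^2\tilde u|$. As $D\tilde u\to0$, comparison with the model harmonic functions ($|x|^{2-n}$ for $n\ge3$, $\ln|x|$ for $n=2$, together with the faster-decaying spherical harmonics) and the Phragm\'en--Lindel\"of/Harnack mechanism upgrade the decay of $D\tilde u$ first to a power rate and then to the sharp one: one shows $\tilde u(x')\to c'$ for $n\ge3$ (resp.\ $\tilde u(x')-\gamma\ln|x'|\to c'$ for $n=2$), subtracts the harmonic model $c'+c_0|x'|^{2-n}$ (resp.\ $c'+c_0\ln|x'|$), and solves $\Delta(\tilde u-\mathrm{model})=f$ with $f=O(|x'|^{2-2n})$, iterating interior Schauder estimates on dyadic annuli to capture all derivatives. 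This gives $\tilde u(x')=c'+c_0|x'|^{2-n}+O_k(|x'|^{1-n})$ for $n\ge3$ and $\tilde u(x')=c'+c_0\ln|x'|+O_k(|x'|^{-1})$ for $n=2$. The coefficient $c_0$ is fixed by the divergence structure: integrating the equation over $\partial B_R$ and letting $R\to\infty$ equates $Res[\tilde u]$ (up to the dimensional normalization) with $-c_0$ for $n\ge3$ and with $c_0$ for $n=2$, matching \eqref{Asymptote3d} and \eqref{Asymptote2d} at $a=0$.

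Finally, substituting $x'=x'(x)$ and $Res[\tilde u]=\sqrt{1-|a|^2}\,Res[u]$ into the expansion of $\tilde u$ and using $u(x)=a\cdot x+\sqrt{1-|a|^2}\,\tilde u(x')$ gives \eqref{Asymptote3d} for $n\ge3$, since $|x'|^{2-n}=\big(\sqrt{|x|^2-(a\cdot x)^2}\big)^{2-n}(1+O(|x|^{-1}))$ and the correction is absorbed into $O_k(|x|^{1-n})$. For $n=2$ the same substitution produces the logarithm $\ln\sqrt{|x|^2-(a\cdot x)^2}$ with coefficient $(1-|a|^2)Res[u]$; but now the change of variables $x'=x'(x)$ itself contains the logarithmic term of $u$, through $x_1'=(x_1-|a|u)/\sqrt{1-|a|^2}$, so expanding $\ln|x'|$ one order further produces exactly the extra term displayed in \eqref{Asymptote2d}, with remainder $O_k(|x|^{-1})$. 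The crux of the argument is the analysis in the normalized frame: converting the qualitative decay $D\tilde u\to0$ into the sharp expansion with the $O_k(|x|^{1-n})$ remainder for \emph{every} $k$ requires a robust Phragm\'en--Lindel\"of/Harnack mechanism for the initial power rate, a careful iteration comparing $\tilde u$ with its harmonic model while tracking the nonlinear error, and repeated interior Schauder estimates at every dyadic scale; the bookkeeping of the two-dimensional cross term under the Lorentz change of variables is the other delicate point.
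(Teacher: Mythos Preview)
Your overall architecture---blow down, rule out lightlike behavior, Lorentz-rotate to $a=0$, then analyze the normalized picture as a perturbation of the Laplacian---matches the paper's. The normalized-frame analysis you sketch (Harnack/Moser to pin down the limit, comparison with the radial/harmonic model, Schauder on dyadic annuli to get $O_k$) is essentially what the paper does in Section~6, and the Lorentz change of variables with the extra $n=2$ cross term is exactly the computation of Sections~6.3--6.4.

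The genuine gap is in your first paragraph. You assert that a subsequential blowdown is a $1$-homogeneous area-maximizing cone which ``is smooth and strictly spacelike away from the vertex (a cone with a lightlike part is not area-maximizing).'' Both clauses are unjustified, and the parenthetical is false: the upper and lower light cones $C_0^{\pm}=\{t=\pm|x|\}$ \emph{are} entire area-maximizing hypersurfaces in $\mathbb L^{n+1}$ with an isolated singularity at the vertex (this is precisely the content of Ecker's classification, quoted here as Lemma~2.2). Bartnik--Simon regularity does not exclude them, because their singular set consists of light rays reaching the boundary/infinity, which is exactly the allowed structure. Moreover, a subsequential blowdown is not automatically $1$-homogeneous until you know the blowdown is unique. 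So nothing in your argument yet prevents $u_\infty=|x|$ (or $-|x|$), and without ruling this out you cannot conclude $\limsup_{|x|\to\infty}|Du|<1$; the Schauder/Harnack machinery that follows then has no uniform ellipticity to stand on.

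Ruling out the light cone as a blowdown is in fact the crux of the paper (Theorem~4.1, Section~4). The paper first proves a spacelike-extension lemma (Section~3) guaranteeing $|u(x)-u(y)|<|x-y|$ outside a large ball, and then runs a four-case dichotomy. If the graph of $u$ climbs too close to the upper light cone along a sequence, a blowdown has a light ray and, via Ecker's Lemma~2.2, must equal $C_0^+$; sliding the cone-like subsolution $(1-\delta)|x|+\text{const}$ underneath and letting $\delta\to0$ then forces $u(2z)-u(z)\ge|2z-z|$ at a boundary-minimum point $z$, contradicting spacelikeness. A separate argument, producing a supporting hyperplane from solutions on large balls and extracting a limit plane with slope $1$, dispatches the case where $u$ oscillates close to both $\pm|x|$. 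Only after this does one get $|u(x)|\le(1-\epsilon)|x|$, and then the Cheng--Yau curvature estimate (Corollary~5.1) yields the uniform gradient bound you need. You should replace your one-line dismissal with this argument (or an equivalent one); everything downstream in your proposal is sound once that is in place.
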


On the other hand, for any bounded closed set $A$, given an admissible
boundary value function $g:\partial A\rightarrow\mathbb{R}$ and prescribed
asymptotic behavior at infinity, the exterior Dirichlet problem for maximal
surface equation is uniquely solvable. We say $g$ is admissible if $g$ is
bounded and there exists a spacelike function $\psi$ in $\mathbb{R}%
^{n}\backslash A$ such that $\psi=g$ on $\partial A$ in the sense of (1.1) in
[BS82] (see Remark 2.1 in Section 2).

\begin{thm}
Let $A\subset\mathbb{R}^{n}$ be a bounded closed set and $g:\partial A
\rightarrow\mathbb{R}$ be an admissible boundary value function. Then

\begin{enumerate}
\item $n=2$, given any $a\in B_{1}$ and $d\in\mathbb{R}$, there exist a unique
smooth solution $u$ of maximal surface equation on $\mathbb{R}^{2}\backslash
A$ such that $u=g$ on $\partial A$ and
\[
u(x)=a\cdot x+d\ln\sqrt{|x|^{2}-(a\cdot x)^{2}}%
+O(1)\ \ \mbox{as}\ x\rightarrow\infty;
\]

\item $n\geq3$, given any $a\in B_{1}$ and $c\in\mathbb{R}$, there exist a
unique smooth solution $u$ of maximal surface equation on $\mathbb{R}%
^{n}\backslash A$ such that $u=g$ on $\partial A$ and
\[
u(x)=a\cdot x+c+o(1)\ \ \mbox{as}\ x\rightarrow\infty.
\]

\end{enumerate}

Of course $u$ enjoys finer asymptotic properties and the relation
$d=(1-|a|^{2})Res[u]$ holds by Theorem 1.1.
\end{thm}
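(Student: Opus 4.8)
The plan is to obtain the solution by exhausting $\mathbb{R}^{n}\backslash A$ with the annuli $\Omega_{R}=B_{R}\backslash A$, solving a Dirichlet problem on each $\Omega_{R}$ by the Bartnik--Simon theory, and then passing to a locally convergent subsequence; uniqueness will follow from the comparison principle for \eqref{Ediv}. The key device throughout is the family of explicit \textquotedblleft catenoidal\textquotedblright\ solutions: the rotationally symmetric solutions $\phi_{\beta}$ of \eqref{Ediv}, determined by $r^{n-1}\phi_{\beta}'/\sqrt{1-(\phi_{\beta}')^{2}}\equiv\beta$, are defined on all of $\mathbb{R}^{n}\backslash\{0\}$, are strictly spacelike, have a conical (null) singularity at the origin, and carry residue a fixed multiple of $\beta$; composing with a Lorentz boost, a translation moving the singularity to a chosen point of $A$, a rotation, and the addition of a constant produces, for any prescribed $a\in B_{1}$, any prescribed asymptotic constant (for $n\geq3$), and any prescribed residue (for $n=2$, the prescribed value $d/(1-|a|^{2})$), an explicit solution $\Phi$ on $\mathbb{R}^{n}\backslash A$ with exactly the asymptotics described in Theorem 1.1; since a Lorentz boost maps the epigraph of a spacelike function to the epigraph of the boosted one, such a $\Phi$ lies entirely on one side of its affine asymptote.

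For existence I would first fix, using that $g$ is bounded by some $M$, two comparison solutions $\Phi^{\pm}$ of this type: for $n\geq3$, take both with asymptotic constant $c$ and with residues of opposite sign and of modulus so large that $\Phi^{-}<a\cdot x+c<\Phi^{+}$ on $\mathbb{R}^{n}\backslash A$ while $\Phi^{+}\geq M$ and $\Phi^{-}\leq-M$ on a fixed ball $B_{\rho_{1}}\supset A$; for $n=2$, take $\Phi^{\pm}=\Phi_{0}\pm C$ for a fixed reference solution $\Phi_{0}$ with the prescribed $a$ and $d$ and with $C$ large. On $\Omega_{R}$ I prescribe the boundary values $g$ on $\partial A$ and, on $\partial B_{R}$, the function $a\cdot x+c$ (resp. $\Phi_{0}$ for $n=2$). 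That this boundary data is admissible for $R$ large is checked by interpolating between the spacelike extension of $g$ furnished by the admissibility hypothesis (and Remark 2.1) near $\partial A$ and the outer function farther out, via a cutoff that is a function of $\log|x|$ on $\Omega_{R}\backslash B_{\rho_{1}}$ — the logarithmic profile is what keeps the gradient of the interpolant below $1$ in spite of the linear growth. Bartnik--Simon [BS82] then gives a solution $u_{R}$ on $\Omega_{R}$, smooth in the interior, and the comparison principle yields $\Phi^{-}\leq u_{R}\leq\Phi^{+}$ on $\Omega_{R}$, a bound independent of $R$. On any fixed ball of radius one inside $\mathbb{R}^{n}\backslash A$ the oscillation of $u_{R}$ is then bounded uniformly in $R$, so the interior gradient estimate for spacelike solutions ([BS82]) gives $|Du_{R}|\leq1-\epsilon_{0}$ there with $\epsilon_{0}$ independent of $R$; hence the equation is uniformly elliptic on compact subsets, Schauder estimates give uniform $C_{loc}^{k}$ bounds, and a subsequence converges in $C_{loc}^{k}$ to an exterior solution $u$. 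This $u$ attains $g$ on $\partial A$ in the sense of [BS82] (the spacelike extension near $\partial A$ being the same for every $R$), and $\Phi^{-}\leq u\leq\Phi^{+}$ forces $u=a\cdot x+c+o(1)$ for $n\geq3$, resp. $u=a\cdot x+d\ln\sqrt{|x|^{2}-(a\cdot x)^{2}}+O(1)$ for $n=2$; Theorem 1.1 then upgrades this to the full expansion and yields $d=(1-|a|^{2})Res[u]$.

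For uniqueness let $u_{1},u_{2}$ be two solutions as in the statement and put $w=u_{1}-u_{2}$. For $n\geq3$ the prescribed common asymptote gives $w\to0$ at infinity while $w=0$ on $\partial A$, so applying the comparison principle on $\Omega_{R}$ and letting $R\to\infty$ gives $w\equiv0$. For $n=2$, the common value of $d$ forces, via Theorem 1.1, $Res[u_{1}]=Res[u_{2}]$ and $w\to\kappa$ for a constant $\kappa$; comparing $u_{1}$ with $u_{2}+\kappa$ on $\Omega_{R}$ as before sandwiches $0\leq w\leq\kappa$ (after relabeling), so $w$ is a bounded solution of the linear divergence-form equation obtained by integrating the linearization of \eqref{Ediv} along the segment from $u_{2}$ to $u_{1}$, an operator uniformly elliptic on $\mathbb{R}^{2}\backslash A$. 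Since a bounded solution of a uniformly elliptic divergence-form equation on a planar exterior domain that vanishes on $\partial A$ must vanish identically (the diffusion associated with such an operator being recurrent in the plane), we conclude $w\equiv0$, hence $\kappa=0$ and $u_{1}=u_{2}$.

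The main obstacle is building the comparison solutions $\Phi^{\pm}$ with all the required features simultaneously — prescribed linear part and residue, the singularity parked inside $A$, and values dominating $g$ near $\partial A$ — and then the passage $R\to\infty$, where the uniform interior gradient/Schauder estimates and the stability of the Bartnik--Simon boundary condition under the limit are the substantive points; verifying admissibility of the mixed boundary data on $\Omega_{R}$ (the logarithmic cutoff) and, in the planar case, supplying the parabolicity input in the uniqueness argument are the remaining technical steps.
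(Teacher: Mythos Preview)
Your overall strategy matches the paper's: Lorentz-transformed catenoidal solutions as two-sided barriers, Bartnik--Simon Dirichlet problems on the annuli $B_{R}\backslash A$, compactness to extract an exterior solution trapped between the barriers, and comparison for uniqueness. The differences are only in execution, and it is worth knowing what the paper does instead at three points.

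For the admissibility of the mixed boundary data on $\Omega_{R}$, the paper does not build a logarithmic cutoff. It uses its spacelike-extension result (Theorem~3.1) to find $R^{\ast}$ with $|\psi(x)-\psi(y)|<|x-y|$ on $\partial B_{R^{\ast}}$, and then simply observes that for $R$ large the jump from $\psi$ on $\partial B_{R^{\ast}}$ to the outer data on $\partial B_{R}$ is at most $\frac{1+|a|}{2}|x-y|$, so a spacelike bridge exists trivially. This is cleaner than your interpolation.

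For the compactness step, the paper does not invoke interior gradient estimates plus Schauder to get $C^{k}_{loc}$ convergence. It passes to a uniform limit, uses the closure of area-maximizing hypersurfaces under uniform convergence (Lemma~2.1), and then rules out light rays in the limit via the barrier bounds; smoothness then comes from the Bartnik--Simon regularity theorem. This avoids having to establish uniform ellipticity of the approximants before taking the limit.

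For uniqueness when $n=2$, both arguments reduce to the same Liouville-type fact for bounded solutions of a uniformly elliptic divergence-form equation on a planar exterior domain vanishing on the inner boundary; the paper simply cites [GS56, Theorem~7], which is exactly the parabolicity/recurrence input you describe.
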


The article is organized as follows. In Section 2, we set up some notations
and definitions, and we collect some results from [CY76], [BS82], and [Ec86]
that are needed in the proofs of the later sections. In Section 3, we prove
that a spacelike function over an exterior domain can be spacelikely extended
to the whole $\mathbb{R}^{n}$. This is the starting point of our work.
Interestingly there is a striking similarity between our argument and the
argument in [CL03, p.571-572] where Caffarelli and Li prove the locally convex
solution of $\det D^{2}u=1$ over an exterior domain can be extended (after
finitely enlarging the complimentary domain $A$) to a global convex function.
In Section 4, we prove a growth control theorem for the exterior solution $u$
at infinity. This is the key content of this paper. Inspired by Ecker's proof
in [Ec86], and relying on his results there, our argument involves
compactness, blowdown analysis and comparison principle. In Section 5, we
prove gradient estimate for $u$ based on the growth control theorem and
Cheng-Yau's estimate on the second fundamental form. In Section 6, we prove
Theorem 1.1. Since the equation (\ref{Ediv}) becomes uniformly elliptic by the
gradient estimate of the previous section, the standard tools such as Harnack
inequality and Schauder estimate apply. The known radially symmetric solutions
play a key role in the proof. In Section 7, we prove Theorem 1.2. We solve the
equation in a series of bigger and bigger ring-shaped domains and use the
compactness method to get an exterior solution. We use the Lorentz
transformations of radially symmetric solutions as barrier functions to locate
the position of the exterior solution. The uniqueness of solutions follows
from comparison principle.

\section{Notations and preliminary results}

We denote the Lorentz-Minkowski space by $\mathbb{L}^{n+1}=\{X=(x,t):x\in
\mathbb{R}^{n},t\in\mathbb{R}\}$, with the flat metric $\sum_{i=1}^{n}%
dx_{i}^{2}-dt^{2}$. And $\langle\cdot,\cdot\rangle$ denotes the inner product
in $\mathbb{L}^{n+1}$ with the signature $(+,\cdots,+,-)$.

The light cone at $X_{0}=(x_{0},t_{0})\in\mathbb{L}^{n+1}$ is defined by
\[
C_{X_{0}}=\{X\in\mathbb{L}^{n+1}:\langle X-X_{0},X-X_{0}\rangle=0\}.
\]
The upper and lower light cones will be denoted by $C^{+}_{X_{0}}$ and
$C^{-}_{X_{0}}$ respectively.

The Lorentz-balls are defined by
\[
L_{R}(X_{0})=\{X\in\mathbb{L}^{n+1}:\langle X-X_{0},X-X_{0}\rangle< R^{2}\}.
\]

Let $M$ be an $n$-dimensional hypersurface in $\mathbb{L}^{n+1}$ which can be
represented as the graph of $u\in C^{0,1}(\Omega)$, where $\Omega$ is a open
set in $\mathbb{R}^{n}$. We say that $M$ (or $u$) is

\textit{weakly spacelike} if $|Du|\leq1$ a.e. in $\Omega$,

\textit{spacelike} if $|u(x)-u(y)|<|x-y|$ whenever $x,y\in\Omega$, $x\neq y$
and the line segment $\overline{xy} \subset\Omega$, and

\textit{strictly spacelike} if $u\in C^{1}(\Omega)$ and $|Du|<1$ in $\Omega$.

If $M$ (or $u$) is strictly spacelike and $u\in C^{2}(\Omega)$, the Lorentz
metric on $\mathbb{L}^{n+1}$ induces a Riemannian metric $g$ on $M$. Under the
coordinates $(x_{1},\cdots,x_{n})\in\Omega$, $g_{ij}=\langle\frac{\partial
X}{\partial x_{i}},\frac{\partial X}{\partial x_{j}}\rangle=\delta_{ij}%
-u_{i}u_{j}$, where $X=(x,u(x))$ is the position vector on the graph of $u$,
and $u_{k}=u_{x_{k}}=\frac{\partial u}{\partial x_{k}}$ for $k=1,\cdots,n$. So
$g=I-Du(Du)^{^{\prime}}$, $\det g=1-|Du|^{2}$, $g^{-1}=I+\frac
{Du(Du)^{^{\prime}}}{1-|Du|^{2}}$ and $g^{ij}=\delta_{ij}+\frac{u_{i}u_{j}%
}{1-|Du|^{2}}$. The second fundamental form is $II_{ij}=\frac{u_{ij}}%
{\sqrt{\det g}}$ and so $|II|^{2}=\frac{g^{ij}g^{kl}u_{ik}u_{jl}}{\det g}$
(see (2.3) in [BS82]) where $u_{ij}=\frac{\partial^{2}u}{\partial
x_{i}\partial x_{j}}$ and the summation convention on repeated indices is
used. Note that $|D^{2}u|\leq|II|.$

The following fundamental results were achieved by Bartnik and Simon in [BS82].

\begin{thm}
[{Solvability of variational problem on bounded domains [BS82, Proposition
1.1]}]Let $\Omega\subset\mathbb{R}^{n}$ be a bounded domain and let
$\varphi:\partial\Omega\rightarrow R$ be a bounded function. Then the
variational problem
\begin{equation}
\sup_{v\in K}\int_{\Omega}\sqrt{1-|Dv|^{2}} \label{VarProb}%
\end{equation}
where $K=\{v\in C^{0,1}(\Omega):|Dv|\leq1$ a.e. in $\Omega$, $v=\varphi$ on
$\partial\Omega\}$ has a unique solution $u$ if and only if the set $K$ is nonempty.
\end{thm}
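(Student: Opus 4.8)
The plan is to run the direct method of the calculus of variations, using the concavity of the integrand $F(p):=\sqrt{1-|p|^{2}}$ on $\overline{B}_{1}$ for existence and its strict concavity for uniqueness. The ``only if'' implication is trivial: a solution of the variational problem is by definition an element of $K$, so $K\ne\emptyset$. It remains to prove that $K\ne\emptyset$ implies that a unique maximizer exists.

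Assume $K\ne\emptyset$. Since $0\le F(p)\le 1$ on $\overline{B}_{1}$ and $\Omega$ is bounded, $m:=\sup_{v\in K}\int_{\Omega}F(Dv)\,dx$ is a finite nonnegative number, and there is a maximizing sequence $u_{k}\in K$ with $\int_{\Omega}F(Du_{k})\to m$. Each $u_{k}$ satisfies $|Du_{k}|\le1$ a.e.\ and has boundary values $\varphi$, so, $\Omega$ and $\varphi$ being bounded, the family $\{u_{k}\}$ is locally equi-Lipschitz and equibounded; after passing to a subsequence we get $u_{k}\to u$ locally uniformly in $\Omega$ and $Du_{k}\rightharpoonup Du$ weakly-$*$ in $L^{\infty}(\Omega;\mathbb{R}^{n})$. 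The boundary condition $u=\varphi$ on $\partial\Omega$ (in the sense of [BS82, (1.1)]) is inherited from the $u_{k}$ through the convergence, and the convex constraint $\{w\in L^{\infty}:|w|\le1\text{ a.e.}\}$ is weakly-$*$ closed, so $|Du|\le1$ a.e.; hence $u\in K$. Finally, since $-F$ is convex on $\overline{B}_{1}$ and lower semicontinuous when extended by $+\infty$ outside $\overline{B}_{1}$, the standard lower semicontinuity theorem for convex integrands applies to $v\mapsto\int_{\Omega}(1-F(Dv))\,dx$, i.e.\ $v\mapsto\int_{\Omega}F(Dv)\,dx$ is weakly-$*$ upper semicontinuous along $W^{1,\infty}$-bounded sequences. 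Therefore $\int_{\Omega}F(Du)\ge\limsup_{k}\int_{\Omega}F(Du_{k})=m$, and since $u\in K$ forces the reverse inequality, $u$ is a maximizer.

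For uniqueness I would first record that $F$ is \emph{strictly} concave on $\overline{B}_{1}$: along any segment $p(s)=p_{0}+sv$ with $v\ne0$, the function $1-|p(s)|^{2}$ is a quadratic in $s$ with leading coefficient $-|v|^{2}<0$, hence never the square of an affine function, so $s\mapsto F(p(s))$ has strictly negative second derivative wherever it is defined; as $\overline{B}_{1}$ is convex, strict concavity follows. If $u_{1},u_{2}\in K$ are both maximizers, then $\tfrac12(u_{1}+u_{2})\in K$ (the set $K$ is convex) and strict concavity gives $\int_{\Omega}F\bigl(\tfrac12(Du_{1}+Du_{2})\bigr)\,dx\ge\tfrac12\int_{\Omega}F(Du_{1})\,dx+\tfrac12\int_{\Omega}F(Du_{2})\,dx=m$ with equality only if $Du_{1}=Du_{2}$ a.e. As the left side is $\le m$, equality holds, so $Du_{1}=Du_{2}$ a.e.; thus $u_{1}-u_{2}$ is locally constant on $\Omega$, and since every connected component of the bounded set $\Omega$ meets $\partial\Omega$, where $u_{1}=u_{2}=\varphi$, we conclude $u_{1}=u_{2}$.

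I expect the main obstacle to be the weak-$*$ upper semicontinuity of the area functional: the gradient $DF(p)=-p/\sqrt{1-|p|^{2}}$ blows up as $|p|\to1$, so a naive supporting-hyperplane estimate fails when $\{|Du|=1\}$ has positive measure, and one must instead invoke the general semicontinuity theory for convex integrands (or reprove it here by writing the concave $F$ as an infimum of affine functions on $\overline{B}_{1}$ and passing to the limit using $Du_{k}\rightharpoonup Du$). The compactness and the boundary-value bookkeeping within the [BS82] framework are routine, and no regularity of $u$ beyond $C^{0,1}$ is claimed or needed here.
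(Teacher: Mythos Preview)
The paper does not give its own proof of this statement: it is quoted verbatim from [BS82, Proposition~1.1] in the preliminaries section, with no argument supplied. So there is nothing in the paper to compare your proof against.

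That said, your direct-method argument is correct and is exactly the expected route (and essentially what Bartnik--Simon do). A couple of minor points worth tightening. First, your justification of strict concavity is slightly telegraphic: the clean statement is that $g(s)=1-|p_0+sv|^2$ is a nonnegative concave quadratic on the relevant interval, and since $\sqrt{\cdot}$ is concave increasing, $\sqrt{g}$ is concave; it fails to be affine on any subinterval because that would force $g$ to be the square of an affine function, impossible with a negative leading coefficient. Second, in the last step you should note explicitly why each connected component of $\Omega$ contains a straight segment ending at a point of $\partial\Omega$ (take any interior point and follow a ray until it first exits), so that the [BS82]-sense boundary condition actually pins down the constant. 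Your identification of the upper-semicontinuity step as the only genuinely delicate point is accurate, and the remedy you sketch (write the concave $F$ on $\overline{B}_1$ as an infimum of affine functions, or regularize to $\sqrt{1-|p|^2+\varepsilon^2}-\varepsilon$ and pass to the limit) is exactly right.
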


\begin{rmk}
In above theorem, $v=\varphi$ on $\partial\Omega$ means that, for every
$x_{0}\in\partial\Omega$ and every open straight line segment $l$ contained in
$\Omega$ and with endpoint $x_{0}$,
\[
\lim_{x\rightarrow x_{0},x\in l}v(x)=\varphi(x_{0}).
\]
Regarding this definition and the existence of weakly spacelike extension of
$\varphi$, we refer the readers to the discussion in [BS82, p.133, p.148--149].
\end{rmk}

\begin{defn}
[Area maximizing hypersurface]A weakly spacelike function $u\in C(\Omega)$
($\Omega\subset\mathbb{R}^{n}$ is not necessarily bounded) is called
\textit{area maximizing} if it solves the variational problem (\ref{VarProb})
with respect to its own boundary values for every bounded subdomain in
$\Omega$. The graph of $u$ is called an \textit{area maximizing hypersurface}.
\end{defn}

\begin{lemma}
[{Closeness of variational solutions [BS82, Lemma 1.3]}]If $\{u_{k}\}$ is a
sequence of area maximizing functions in $\Omega$ and $u_{k}\rightarrow u$ in
$\Omega$ locally uniformly, then $u$ is also an area maximizing function.
\end{lemma}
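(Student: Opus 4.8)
The plan is to deduce the lemma from just two features of the bounded-domain problem of Theorem 2.1: its \emph{unique} solvability, and a comparison principle in the boundary data. First I would record the elementary facts about the limit. Being area maximizing, each $u_{k}$ lies in $C(\Omega)$ and is weakly spacelike; as a locally Lipschitz function with $|Du_{k}|\le 1$ a.e.\ it is $1$-Lipschitz on every convex subset of $\Omega$. A locally uniform limit of such functions has the same properties, so $u\in C(\Omega)$ with $|Du|\le 1$ a.e.\ (hence $u$ is weakly spacelike and locally Lipschitz), and for every bounded subdomain $\Omega'$ with $\overline{\Omega'}\subset\Omega$ one has $u,u_{k}\in C^{0,1}(\overline{\Omega'})$ and $\eta_{k}:=\|u_{k}-u\|_{L^{\infty}(\overline{\Omega'})}\to 0$. (In Definition 2.1 one tacitly restricts to such $\Omega'$, so that the boundary values are defined; this is the only case we need.)

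The core step is the comparison principle: if $\varphi_{1}\le\varphi_{2}$ are bounded functions on $\partial\Omega'$ with nonempty admissible classes, and $u_{\varphi_{i}}$ denotes the unique maximizer from Theorem 2.1, then $u_{\varphi_{1}}\le u_{\varphi_{2}}$ on $\Omega'$. To prove it, set $v_{1}=\min(u_{\varphi_{1}},u_{\varphi_{2}})$ and $v_{2}=\max(u_{\varphi_{1}},u_{\varphi_{2}})$. These are Lipschitz with $|Dv_{i}|\le 1$ a.e.; they satisfy $v_{1}=\varphi_{1}$ and $v_{2}=\varphi_{2}$ on $\partial\Omega'$ in the sense of Remark 2.1 (limits along line segments commute with $\min$ and $\max$); and for a.e.\ $x$ the pair $(Dv_{1}(x),Dv_{2}(x))$ equals $(Du_{\varphi_{1}}(x),Du_{\varphi_{2}}(x))$ up to order, since on the coincidence set $\{u_{\varphi_{1}}=u_{\varphi_{2}}\}$ the two gradients agree a.e. Hence $\int_{\Omega'}\sqrt{1-|Dv_{1}|^{2}}+\int_{\Omega'}\sqrt{1-|Dv_{2}|^{2}}=\int_{\Omega'}\sqrt{1-|Du_{\varphi_{1}}|^{2}}+\int_{\Omega'}\sqrt{1-|Du_{\varphi_{2}}|^{2}}$. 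Because $v_{i}$ is admissible for $\varphi_{i}$ and $u_{\varphi_{i}}$ is the maximizer, each left-hand summand is $\le$ the corresponding right-hand one, so both are equalities; uniqueness in Theorem 2.1 then forces $v_{1}=u_{\varphi_{1}}$, i.e.\ $u_{\varphi_{1}}\le u_{\varphi_{2}}$. The identical rearrangement shows $u_{\varphi+c}=u_{\varphi}+c$ for every constant $c$.

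To conclude, fix a bounded subdomain $\Omega'$ with $\overline{\Omega'}\subset\Omega$ and put $\varphi=u|_{\partial\Omega'}$, $\varphi_{k}=u_{k}|_{\partial\Omega'}$, so that $\varphi-\eta_{k}\le\varphi_{k}\le\varphi+\eta_{k}$ on $\partial\Omega'$. (All admissible classes in sight are nonempty, since $u|_{\Omega'}$, $u_{k}|_{\Omega'}$ and their constant shifts are admissible.) Since $u_{k}$ is area maximizing, $u_{k}|_{\Omega'}$ is exactly the maximizer $u_{\varphi_{k}}$; the comparison principle together with the constant-shift identity gives $u_{\varphi}-\eta_{k}\le u_{k}\le u_{\varphi}+\eta_{k}$ on $\Omega'$. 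Letting $k\to\infty$ and using $u_{k}\to u$ yields $u=u_{\varphi}$ on $\Omega'$, i.e.\ $u$ solves the variational problem with respect to its own boundary values on $\Omega'$. As $\Omega'$ was arbitrary and $u\in C(\Omega)$ is weakly spacelike, $u$ is area maximizing.

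I expect the only genuine content to be the comparison principle; as sketched it is a soft consequence of the uniqueness in Theorem 2.1 via the $\min/\max$ rearrangement, and the one point that must be handled carefully is verifying that $\min$, $\max$ and constant shifts of the variational solutions really belong to the admissible classes with the correct boundary trace in the sense of Remark 2.1. A more direct-looking alternative — approximating a competitor $v$ for $u$ on $\Omega'$ by competitors for $u_{k}$ obtained by clamping $v$ between the upper and lower light-cone envelopes of $u_{k}|_{\partial\Omega'}$ and then invoking upper semicontinuity of $v\mapsto\int\sqrt{1-|Dv|^{2}}$ under weak-$*$ $W^{1,\infty}$ convergence — would require controlling the measure of the clamping region, which need not be small (a ``tent'' competitor can coincide with its own cone envelope), so I would not pursue that route.
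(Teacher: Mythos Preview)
The paper does not give its own proof of this lemma; it is quoted verbatim as a preliminary from [BS82, Lemma 1.3]. So there is no in-paper argument to compare against.

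Your proof is correct. The $\min/\max$ rearrangement together with the uniqueness clause of Theorem 2.1 gives the comparison principle $u_{\varphi_{1}}\le u_{\varphi_{2}}$ whenever $\varphi_{1}\le\varphi_{2}$, and combined with the constant-shift identity $u_{\varphi+c}=u_{\varphi}+c$ this squeezes $u_{k}=u_{\varphi_{k}}$ between $u_{\varphi}-\eta_{k}$ and $u_{\varphi}+\eta_{k}$ on each $\Omega'\subset\subset\Omega$, forcing $u=u_{\varphi}$ in the limit. The technical point you flag --- that $\min$, $\max$, and constant shifts of admissible functions remain admissible with the correct boundary trace in the sense of Remark 2.1 --- is real but routine: limits along line segments commute with $\min$ and $\max$, and on the coincidence set $\{u_{\varphi_{1}}=u_{\varphi_{2}}\}$ the gradients of two Lipschitz functions agree a.e.\ by Rademacher's theorem applied to the difference, so the pointwise identity $\sqrt{1-|Dv_{1}|^{2}}+\sqrt{1-|Dv_{2}|^{2}}=\sqrt{1-|Du_{\varphi_{1}}|^{2}}+\sqrt{1-|Du_{\varphi_{2}}|^{2}}$ holds a.e.\ as you claim. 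Your restriction to compactly contained $\Omega'$ is the natural reading of Definition 2.1 and is what is needed in the rest of the paper.

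For context, the original argument in [BS82] is closer to the ``direct'' route you reject in your last paragraph: it works with the area functional and its semicontinuity rather than via a comparison principle. Your approach trades the boundary-matching headache you describe for the (easier) verification that the rearranged functions lie in the right admissible class; this is a clean alternative.
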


One key result in [BS82, Theorem 3.2] is that if an area maximizing
hypersurface contains a segment of light ray, then it contains the whole of
the ray extended all the way to the boundary or to infinity. This implies the
following conclusion.

\begin{thm}
[The relationship between the variational solutions and the solutions of
maximal surface equation]The solution $u$ of (\ref{VarProb}) is smooth and
solves equation (\ref{Ediv}) in
\[
reg\ u:=\Omega\backslash sing\ u
\]
where
\[
sing\ u:=\{\overline{xy}:x,y\in\partial\Omega,x\neq y,\overline{xy}%
\subset\Omega\ \mbox{and}\ |\varphi(x)-\varphi(y)|=|x-y|\}.
\]
Furthermore
\[
u(tx+(1-t)y)=t\varphi(x)+(1-t)\varphi(y),\ 0<t<1
\]
where $x,y\in\partial\Omega$ are such that $\overline{xy}\subset\Omega$ and
$|\varphi(x)-\varphi(y)|=|x-y|$.
\end{thm}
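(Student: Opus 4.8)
The plan is to derive both assertions from the interior regularity theory of Bartnik--Simon, together with the light-ray structure result [BS82, Theorem 3.2] quoted above, plus one elementary one-dimensional observation.

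Recall that the minimizer $u$ furnished by Theorem 2.1 is continuous (indeed locally Lipschitz) and weakly spacelike in $\Omega$, attains the boundary data $\varphi$ in the sense of Remark 2.1, and that its graph is area maximizing in the sense of Definition 2.1. By the interior regularity theorem of [BS82], $u$ is smooth and solves (\ref{Ediv}) on the open subset of $\Omega$ where it is strictly spacelike, and the complementary relatively closed set $S$ consists precisely of those $x\in\Omega$ for which the graph of $u$ contains a nondegenerate light-like segment through $(x,u(x))$. Thus the theorem reduces to two claims: (i) along any segment $\overline{xy}\subset\Omega$ with $x,y\in\partial\Omega$, $x\neq y$, and $|\varphi(x)-\varphi(y)|=|x-y|$ one has $u(tx+(1-t)y)=t\varphi(x)+(1-t)\varphi(y)$ for $0<t<1$; and (ii) $S=sing\ u$.

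For (i), restrict $u$ to such a segment. Since $u$ is continuous and weakly spacelike, $t\mapsto u(tx+(1-t)y)$ is Lipschitz with constant $|x-y|$ on $(0,1)$, and by Remark 2.1 its limits at $t=1$ and $t=0$ equal $\varphi(x)$ and $\varphi(y)$. A function on $[0,1]$ that is Lipschitz with constant $|x-y|$ and whose two endpoint values differ by exactly $|x-y|$ must be affine, which forces $u(tx+(1-t)y)=t\varphi(x)+(1-t)\varphi(y)$; this is (i). In particular the graph of $u$ then contains the null segment from $(x,\varphi(x))$ to $(y,\varphi(y))$, so every interior point of $\overline{xy}$ lies in $S$, giving $sing\ u\subseteq S$. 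For the reverse inclusion, let $x_{0}\in S$, so the graph contains a light-like segment through $(x_{0},u(x_{0}))$; since the graph is area maximizing, [BS82, Theorem 3.2] extends this segment, inside the graph, to a maximal light ray reaching the boundary $\partial\Omega$ or escaping to infinity. Boundedness of $\Omega$ excludes the latter, so the projections of its two endpoints are points $x,y\in\partial\Omega$, the graph heights there equal the boundary limits $\varphi(x),\varphi(y)$ by Remark 2.1, and nullity of the chord yields $|\varphi(x)-\varphi(y)|=|x-y|$. Hence $x_{0}\in\overline{xy}\subseteq sing\ u$, so $S\subseteq sing\ u$. Combining, $S=sing\ u$, and therefore $u$ is smooth and solves (\ref{Ediv}) on $\Omega\setminus sing\ u=reg\ u$.

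The only genuinely delicate point I anticipate is the application of [BS82, Theorem 3.2]: one must check that the maximal extension of a light ray inside the area-maximizing graph actually reaches $\partial\Omega$ at both ends (using boundedness of $\Omega$ to rule out escape to infinity) and that the value of $u$ at such a boundary endpoint is the correct trace $\varphi$ via the segment-limit convention of Remark 2.1. The one-dimensional affinity argument and the quoted interior regularity of [BS82] are routine.
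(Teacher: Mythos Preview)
Your proposal is correct and follows exactly the route the paper itself indicates: the paper does not give a self-contained proof of this theorem but states it as a consequence of [BS82, Theorem 3.2] (the light-ray extension result) together with the interior regularity theory of Bartnik--Simon, which is precisely the derivation you outline. Your handling of the two delicate points---using boundedness of $\Omega$ to force the extended light ray to reach $\partial\Omega$ at both ends, and invoking the segment-limit boundary convention of Remark 2.1 to identify the endpoint values with $\varphi$---is exactly what is needed to make the implication rigorous.
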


\begin{rmk}
[Solvability of maximal surface equation on bounded domains]If the boundary
data $\varphi$ admits a weakly spacelike extension and satisfies that
$|\varphi(x)-\varphi(y)|<|x-y|$ for all $x,y\in\partial\Omega$ with
$\overline{xy}\subset\Omega$ and $x\neq y$, then $sing\ u=\emptyset$ and hence
smooth $u$ solves the equation (\ref{Ediv}) in $\Omega$.
\end{rmk}

Bartnik proved the following

\begin{thm}
[{Bernstein theorem for variational solutions [Ec86, Theorem F]}]Entire area
maximizing hypersurfaces in $L^{n+1}$ are weakly spacelike hyperplanes.
\end{thm}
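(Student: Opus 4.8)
The plan is to dichotomize according to whether the graph $M=\mathrm{graph}\,u$ contains a nondegenerate light segment; in the first case I would invoke the classical smooth Bernstein theorem, and in the second I would run a sliding/comparison argument along the complete light line that necessarily sits on $M$.

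First, suppose $M$ contains no light segment. Since $u$ is weakly spacelike it is globally $1$-Lipschitz, and since $u$ is area maximizing it is, on every ball $B_R$, the solution of the variational problem \eqref{VarProb} with its own boundary values; the absence of light segments means $\mathrm{sing}(u|_{B_R})=\emptyset$, so Theorem 2.2 (equivalently Remark 2.2) gives that $u$ is smooth and strictly spacelike on $B_R$ and solves \eqref{Ediv} there. Letting $R\to\infty$, $u$ is a smooth entire solution of \eqref{Ediv} with $|Du|<1$ on $\mathbb{R}^n$, so the Bernstein theorem of Calabi [Ca68] and Cheng-Yau [CY76] forces $u(x)=a\cdot x+c$ with $|a|<1$, a strictly spacelike hyperplane.

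Now suppose $M$ contains a light segment. Because $M$ is area maximizing over all of $\mathbb{R}^n$, which has empty boundary, [BS82, Theorem~3.2] forces that segment to extend all the way to infinity, i.e.\ to a complete light line $L\subset M$; after a rotation of $\mathbb{R}^n$ and possibly replacing $u$ by $-u$, there are a unit vector $e$ and a point $x_0$ with $u(x_0+te)=u(x_0)+t$ for all $t\in\mathbb{R}$, so $L=\{(x_0+te,\,u(x_0)+t):t\in\mathbb{R}\}$ lies on $M$. The idea is to slide $M$ along $L$: for $s>0$ set $u_s(y):=u(y-se)+s$, whose graph is $M_s=M+s(e,1)$, again an entire area maximizing hypersurface; one checks directly that $L\subset M_s$ as well, so $u_s=u$ on $L$, while the $1$-Lipschitz bound $u(y-se)-u(y)\ge -s$ gives $u_s\ge u$ on all of $\mathbb{R}^n$. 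I would then invoke the comparison principle for area maximizing hypersurfaces to conclude $u_s\equiv u$ for every $s>0$. Since then $u(y+te)=u(y)+t$ for all $y$ and all $t$, we get $D_e u=1$ wherever $u$ is differentiable, which together with $|Du|\le 1$ forces $Du\equiv e$ a.e., hence $u(x)=e\cdot x+c$ by continuity, a null weakly spacelike hyperplane.

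The hard part is this last comparison step. Away from the singular sets the difference $w=u_s-u$ solves a linear, locally uniformly elliptic equation with no zeroth-order term (the equation \eqref{Endiv} being quasilinear), so the strong maximum principle propagates the contact; the delicate point is that the contact set $L$ lies inside the singular sets of both $u$ and $u_s$, where the hypersurfaces are light-like rather than strictly spacelike, and to push the contact off $L$ one must use the precise description from [BS82, Theorem~3.2] of how a complete light ray is embedded in an area maximizing hypersurface — alternatively, one can argue the propagation through a blowdown limit using the closeness Lemma 2.1. Everything else, namely the reduction to the smooth case and the invocation of the Calabi--Cheng-Yau theorem, is routine given the results already collected above.
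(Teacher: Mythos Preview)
The paper does not prove this theorem; it is quoted as a preliminary result attributed to Bartnik and cited as [Ec86, Theorem~F], so there is no proof in the paper to compare your argument against.

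On the merits: your Case~1 is correct and standard. Your Case~2, however, has a real gap that you yourself flag but do not close. The only contact you exhibit between $u$ and $u_s$ lies on the light line $L$, which sits inside the singular set of both functions; the strong maximum principle for the linearized equation is available only on the strictly spacelike (regular) part, and you give no mechanism to propagate the equality $u_s=u$ from $L$ into that regular part. Saying one ``must use the precise description from [BS82, Theorem~3.2]'' or ``argue through a blowdown limit'' is not an argument, and in fact no such machinery is needed here.

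The fix is elementary and uses no PDE at all. Once you know $u$ is globally $1$-Lipschitz and contains a complete light line --- say, after normalization, $u(te_n)=t$ for all $t\in\mathbb{R}$ --- the Lipschitz bound $|u(x)-u(te_n)|\le|x-te_n|$ gives, for every $t\in\mathbb{R}$,
\[
t-\sqrt{|x'|^{2}+(x_n-t)^{2}}\ \le\ u(x)\ \le\ t+\sqrt{|x'|^{2}+(x_n-t)^{2}}.
\]
Letting $t\to+\infty$ in the left inequality and $t\to-\infty$ in the right one yields $u(x)\ge x_n$ and $u(x)\le x_n$ respectively, hence $u(x)=x_n$, the null hyperplane containing $L$. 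This replaces your entire sliding/comparison step and closes the argument.
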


\begin{defn}
[{Isolated singularity [Ec86, p.382]}]A weakly spacelike hypersurface $M$ in
$L^{n+1}$ containing $0$ is called an area maximizing hypersurface having an
isolated singularity at $0$ if $M\backslash\{0\}$ is area maximizing but $M$
cannot be extended as an area maximizing hypersurface into $0$.
\end{defn}

For a weakly spacelike entire or exterior hypersurface $M$ (\textit{i.e.}, $u$
is defined on $\mathbb{R}^{n}$ or an exterior domain $\mathbb{R}^{n}\backslash
A$ with $A$ bounded), we define $M_{r}=r^{-1}M$ with $r>0$ is the graph of
$u_{r}(x)=r^{-1}(rx)$. If for some $r_{j}\rightarrow+\infty$, $u_{r_{j}}(x)$
converge locally uniformly to a function $u_{\infty}(x)$ on $\mathbb{R}^{n}$
or $\mathbb{R}^{n}\backslash\{0\}$, then $u_{\infty}$ (its graph $M_{\infty}$)
is called a blowdown of $u$ ($M$). Note that by weakly spacelikeness,
Arzela-Ascoli theorem always ensures the existence of blowdowns. By Lemma 2.1,
$u_{\infty}(x)$ ($M_{\infty}$) is area maximizing on $\mathbb{R}^{n}$ or
$\mathbb{R}^{n}\backslash\{0\}$ and $u_{\infty}(0)=0$.

Ecker proved that the isolated singularities of area maximizing hypersurface
are light cone like ([Ec86, Theorem 1.5]). The following lemma will also be
used in our proof of Theorem 1.1.

\begin{lemma}
[{[Ec86, Lemma 1.10]}]Let $M$ be an entire area maximizing hypersurface having
an isolated sigularity at $0$ and assume that some blowdown of $M$ also has an
isolated singularity at $0$. Then $M$ has to be either $C_{0}^{+}$ or
$C_{0}^{-}$.
\end{lemma}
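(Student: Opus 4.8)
The plan is to exploit the rigidity of light cones as area maximizing hypersurfaces with isolated singularity, together with the fact that a blowdown of a cone is the cone itself. First I would recall, from Ecker's classification ([Ec86, Theorem 1.5]), that an area maximizing hypersurface $M$ with an isolated singularity at $0$ is asymptotic to a light cone near $0$: there is a point $X_0$ on the $t$-axis (or more precisely the graph of an affine weakly spacelike function tangent to the singularity) such that, rescaling $M$ near $0$, the blowups $r^{-1}M$ as $r\to 0^+$ converge to one of the two light cones $C_0^{\pm}$. The hypothesis of the lemma is instead about the behavior at \emph{infinity}: some blowdown $M_\infty$ of $M$ also has an isolated singularity at $0$.

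Next I would use the Bernstein-type dichotomy. Since $M$ is entire and area maximizing, if it had \emph{no} singularity it would be a weakly spacelike hyperplane by Theorem 2.3, contradicting the assumption that $0$ is a genuine isolated singularity; so $0$ is the only singular point and $M\setminus\{0\}$ is a smooth maximal hypersurface. The blowdown $M_\infty$ is again area maximizing on $\mathbb{R}^n$ (Lemma 2.1), entire, with $u_\infty(0)=0$, and by hypothesis it has an isolated singularity at $0$. Now $M_\infty$ is in addition a \emph{cone} (it is invariant under dilations about $0$, being a limit of the rescalings $u_{r_j}$ and itself a blowdown). An area maximizing cone with a single isolated singularity at its vertex, by Ecker's near-singularity description applied at the vertex $0$ of $M_\infty$, must coincide with one of the light cones $C_0^{+}$ or $C_0^{-}$: the tangent cone at the singularity of a cone is the cone itself, and Ecker's theorem forces that tangent cone to be a light cone. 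Hence $M_\infty = C_0^{+}$ or $M_\infty = C_0^{-}$.

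It then remains to propagate this rigidity from the blowdown back to $M$ itself. Here I would argue by comparison: say $M_\infty = C_0^+$. For large $R$, the rescaled surface $u_{r_j}$ lies in a narrow neighborhood of the upper light cone on, say, $B_2\setminus B_1$; translating back, $u$ lies close to $C_0^+$ on the annulus $B_{2r_j}\setminus B_{r_j}$. Since $C_0^+$ is an exact solution (away from $0$) and the maximal surface operator obeys the comparison principle on regions where the solutions are strictly spacelike, one squeezes $M$ between suitably translated upper light cones on larger and larger annuli; letting $r_j\to\infty$ forces $M$ to coincide with $C_0^+$ on all of $\mathbb{R}^n\setminus\{0\}$, and then also at $0$ by continuity. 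The sign case $M_\infty=C_0^-$ is symmetric under $u\mapsto -u$.

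The main obstacle I anticipate is the last step: controlling $M$ on a \emph{fixed} region from information about its blowdowns, which are only locally uniform limits along a subsequence $r_j\to\infty$. One must rule out that $M$ oscillates between being cone-like at scale $r_j$ but behaving differently at intermediate scales. The way around this is precisely that $M$ already has an isolated singularity at $0$, so by Ecker's theorem \emph{all} blowups at $0$ are light cones; combined with the fact that \emph{some} blowdown is also a light cone, the light ray in $M$ coming out of $0$ — guaranteed by [BS82, Theorem 3.2] once $M$ contains a cone-segment — extends to infinity, pinning the whole of $M$ to a single light cone. Making this "the ray extends to infinity" argument do the real work, rather than the comparison-principle squeezing, is likely the cleanest route, and I would lean on the Bartnik–Simon light-ray extension result as the decisive tool.
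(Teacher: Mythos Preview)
The paper does not supply its own proof of this lemma: it is quoted verbatim from Ecker's paper [Ec86, Lemma 1.10] and used as a black box. So there is no in-paper argument to compare against, and your proposal must be judged on its own merits.

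On those merits, there is a genuine gap at the pivot of your argument. You assert that the blowdown $M_\infty$ is a cone, ``being a limit of the rescalings $u_{r_j}$ and itself a blowdown.'' That inference is not valid: a subsequential limit $\lim_{j}r_j^{-1}M$ is dilation-invariant only if one knows the limit is independent of the sequence (equivalently, that $\lambda M_\infty=\lim_j(\lambda^{-1}r_j)^{-1}M$ equals $M_\infty$ for every $\lambda>0$). In the Euclidean area-minimizing setting this comes from the monotonicity formula, but you invoke no Lorentzian analogue, and none is stated in the paper. Without conicality of $M_\infty$, the step ``tangent cone of a cone at its vertex is the cone itself, hence $M_\infty=C_0^\pm$'' collapses, and the hypothesis that $M_\infty$ has an isolated singularity gives you only that its blow\emph{up} at $0$ is a light cone, not that $M_\infty$ is one.

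Your final ``propagation'' step is also incomplete in both variants. The comparison-principle squeezing uses light cones as barriers, but $|x|$ is not strictly spacelike, so the comparison principle for \eqref{Ediv} does not apply to it directly; you would need strictly spacelike sub/supersolutions converging to the cone. The Bartnik--Simon light-ray alternative requires exhibiting an actual light segment inside $M$, which you have not produced: knowing that $r_j^{-1}M$ is close to $C_0^+$ on an annulus does not put a light segment in $M$. Ecker's own argument presumably uses the structure he develops for isolated singularities (his Theorem 1.5 and the barriers $w_\lambda$) rather than the route you sketch; if you want to reconstruct it, the missing ingredient is a mechanism---monotonicity, a flux/residue identity, or a direct barrier comparison on $\mathbb{R}^n\setminus\{0\}$---that forces the large-scale and small-scale cones to agree and to trap $M$ between them.
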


We also need the following radial, catenoid like solutions to the maximal
surface equation of (\ref{Ediv}) in $\mathbb{R}^{n}\backslash\{0\},$ used as
barriers in [BS82] and [Ec86]. For $\lambda\in\mathbb{R},$ set
\begin{equation}
w_{\lambda}(x):=\int_{0}^{|x|}\frac{\lambda}{\sqrt{t^{2(n-1)}+\lambda^{2}}}dt.
\label{SolRadial}%
\end{equation}
For $n\geq2$, the integral $\int_{0}^{+\infty}\frac{\lambda}{\sqrt
{t^{2(n-1)}+\lambda^{2}}}dt$ is bounded and we denote this value as
$M(\lambda,n)$. More precisely, by computation
\begin{equation}
\int_{0}^{r}\frac{\lambda}{\sqrt{t^{2(n-1)}+\lambda^{2}}}dt=M(\lambda
,n)-\frac{\lambda}{n-2}r^{2-n}+O(r^{4-3n}) \label{SolRadialAsymp3d}%
\end{equation}
for large $r$. It is obvious that $M(\lambda,n)=sign(\lambda)|\lambda
|^{\frac{1}{n-1}}M(1,n)\rightarrow\pm\infty$ as $\lambda\rightarrow\pm\infty$
and $M(\lambda,n)\rightarrow0$ as $\lambda\rightarrow0$. For $n=2$, the
integral $\int_{0}^{+\infty}\frac{\lambda}{\sqrt{t^{2}+\lambda^{2}}}dt$ is
infinite and by computation
\begin{equation}
\int_{0}^{r}\frac{\lambda}{\sqrt{t^{2}+\lambda^{2}}}dt=m(\lambda)+\lambda\ln
r+O(r^{-2}) \label{SolRadialAsymp2d}%
\end{equation}
for large $r$, where $m(\lambda)=\int_{0}^{1}\frac{\lambda}{\sqrt
{t^{2}+\lambda^{2}}}dt+\int_{1}^{+\infty}(\frac{\lambda}{\sqrt{t^{2}%
+\lambda^{2}}}-\frac{\lambda}{t})dt$.

\begin{defn}
[Lorentz transformations, the speed of light is normalized to 1]For a
parameter $\kappa\in(-1,1)$, the Lorentz transformation $L_{\kappa}:
\mathbb{L}^{n+1}\rightarrow\mathbb{L}^{n+1}$ is defined as
\[
L_{\kappa}: (x^{\prime},x_{n},t)\rightarrow(x^{\prime},\frac{x_{n}+\kappa
t}{\sqrt{1-\kappa^{2}}},\frac{\kappa x_{n}+ t}{\sqrt{1-\kappa^{2}}})
\]
where $x^{\prime}=(x_{1},\cdots,x_{n-1})$.
\end{defn}

The Lorentz transformations are isometries of $\mathbb{L}^{n+1}$. $L_{\kappa}$
maps spacelike (weakly spacelike) surfaces to spacelike (weakly spacelike)
surfaces and it maps maximal surfaces (area maximizing surfaces) to maximal
surfaces (area maximizing surfaces). Geometrically $L_{\kappa}$ can be seen as
a hyperbolic rotation. It maps the light cone $\{(x,t)\in\mathbb{L}^{n+1}:
t=|x|\}$ to itself and it maps the horizontal hyperplanes to the hyperplanes
with slope $\kappa$:
\[
L_{\kappa}(\{(x,t)\in\mathbb{L}^{n+1}: t=T\})=\{(x,t)\in\mathbb{L}^{n+1}:
t=\sqrt{1-\kappa^{2}}T+\kappa x_{n}\},
\]
for $T\in(-\infty, +\infty)$.

More generally, for any vector $a\in B_{1}$ we define $L_{a}:=T_{a}%
L_{|a|}T_{a}^{-1}$ where $T_{a}$ is a rotation that keeps $t$-axis fixed and
transforms $e_{n}$ to $\frac{a}{|a|}$ in $\mathbb{R}^{n}$ (in case of $a=0$ we
just define $T_{0}:=id$).

\section{Extension of spacelike hypersurface with hole}

We start our proofs for the two main theorems by extending any spacelike
function over an exterior domain to a global spacelike function, after
finitely enlarging the bounded complimentary domain.

\begin{thm}
Let $u$ be a spacelike function in $\mathbb{R}^{n}\backslash A$ with $A$ being
bounded. Then there exists $R^{\ast}>0$ such that $\left\vert u\left(
x\right)  -u\left(  y\right)  \right\vert <|x-y|$ for all $x,y\in$
$\mathbb{R}^{n}\backslash B_{R^{\ast}}.$
\end{thm}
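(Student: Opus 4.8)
The plan is to show that, far from the hole $A$, the spacelike function $u$ in $\mathbb{R}^n\setminus A$ can be redefined (or merely observed to be) strictly spacelike in the sense that the chord condition $|u(x)-u(y)|<|x-y|$ holds for all pairs $x,y$ outside a large ball $B_{R^*}$. The point is that the "bad" pairs — those where $|u(x)-u(y)|$ is close to $|x-y|$ — can only occur for $x,y$ that are far apart but whose connecting segment must graze the hole $A$; once both points are pushed to infinity the hole becomes negligible, and the interior spacelikeness of $u$ propagates. First I would argue by contradiction: suppose there exist sequences $x_j,y_j\in\mathbb{R}^n\setminus B_j$ with $|u(x_j)-u(y_j)|\to|x_j-y_j|$ (or equal to, depending on how the negation is set up). Two cases arise: either $|x_j-y_j|$ stays bounded, or $|x_j-y_j|\to\infty$.

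In the bounded case, after passing to a subsequence and translating so that (say) the midpoint is fixed, one uses that $u$ restricted to a bounded neighborhood of the configuration — which lies in $\mathbb{R}^n\setminus A$ once $j$ is large, since $A$ is bounded — is a spacelike, hence uniformly Lipschitz, function; by Arzel\`a--Ascoli the translated restrictions converge locally uniformly to a limit $u_\infty$ which is still weakly spacelike and satisfies $|u_\infty(x_\infty)-u_\infty(y_\infty)|=|x_\infty-y_\infty|$ along a segment $\overline{x_\infty y_\infty}$ now lying in the domain. But a weakly spacelike function achieving equality on a chord must be affine (of slope one) along that whole segment, and the crucial input is that this segment in the limit lies entirely in the region where $u$ was genuinely spacelike (strict inequality) — here one must be careful to ensure the limiting segment does not degenerate onto the boundary of the removed region; one arranges this by choosing the normalization so that a definite portion of the segment stays a definite distance from $A$, contradicting strictness of $u$ there. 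This is essentially the compactness-and-rigidity mechanism, and I expect this degeneration issue — guaranteeing the limiting segment actually witnesses a violation of spacelikeness \emph{inside} $\mathbb{R}^n\setminus A$ rather than in some limit object — to be the main technical obstacle.

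In the unbounded case $|x_j-y_j|\to\infty$, I would rescale: set $r_j=|x_j|+|y_j|\to\infty$ and consider $u_{r_j}(z)=r_j^{-1}u(r_jz)$, which is spacelike on $r_j^{-1}(\mathbb{R}^n\setminus A)$, an exterior domain exhausting $\mathbb{R}^n\setminus\{0\}$. Passing to a blowdown $u_\infty$ on $\mathbb{R}^n\setminus\{0\}$ (weakly spacelike, with $u_\infty(0)=0$ if we also normalize the additive constant), the rescaled points converge to $\bar x,\bar y$ with $|\bar x-\bar y|>0$ and $|u_\infty(\bar x)-u_\infty(\bar y)|=|\bar x-\bar y|$. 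Now the segment $\overline{\bar x\bar y}$ either avoids the origin — in which case $u_\infty$ is weakly spacelike on a neighborhood of it and rigidity forces $u_\infty$ affine of slope one along it, which one then pulls back to get a genuine chord violation for $u$ at finite scale on a segment avoiding $A$ — or the segment passes through (or near) $0$. The latter is handled by perturbing the endpoints slightly (the set of chords through a single point is lower-dimensional) or by invoking that a weakly spacelike function with an isolated singularity still cannot contain a light ray through the singular point on both sides without being a light cone, which would be incompatible with $u$ being the graph of a function bounded near $A$. Assembling the two cases, one fixes $R^*$ large enough to rule out both, giving the claim. The similarity with the Caffarelli--Li convex-extension argument, as the authors note, is that in both settings a local rigidity/convexity property is shown to survive passage to infinity once the bounded obstruction is out of the way.
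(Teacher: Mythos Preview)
Your compactness/blowdown strategy has a genuine gap that I do not see how to close, and the paper takes a completely different, elementary route.

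First, your Case~1 is vacuous: if $x_j,y_j\in\mathbb{R}^n\setminus B_j$ violate the chord inequality, the segment $\overline{x_jy_j}$ must meet $A$ (otherwise spacelikeness of $u$ gives $|u(x_j)-u(y_j)|<|x_j-y_j|$ directly), and since both endpoints lie outside $B_j$ this forces $|x_j-y_j|\to\infty$. So all the weight is on Case~2.

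In Case~2 the blowdown $u_\infty$ is only \emph{weakly spacelike}; the hypothesis on $u$ is spacelikeness, not that $u$ solves the maximal surface equation or is area-maximizing, so no rigidity from Theorem~2.2 or Lemma~2.2 is available. A weakly spacelike function may perfectly well contain lightlike chords (e.g.\ $u_\infty(x)=x_n$ or $u_\infty(x)=|x|$), so obtaining $|u_\infty(\bar x)-u_\infty(\bar y)|=|\bar x-\bar y|$ is not a contradiction. Your ``pull back'' step is the crux and does not work: for each finite $j$ the rescaled function $u_{r_j}$ satisfies the \emph{strict} inequality on any segment avoiding $A$, and strict inequality for every $j$ is entirely compatible with equality in the limit. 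The perturbation idea fails because moving the endpoints destroys the equality you need, and the claim that a light ray through the singular point forces a light cone is false for mere weakly spacelike functions; moreover $u$ is not assumed bounded near $A$, so the final incompatibility you invoke is not there.

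The paper's proof is direct and uses no compactness. Step~1 shows by explicit geometry (with concrete constants) that one can find a ball $B_{R_0}(x_0)\supset A$ on whose boundary the oscillation of $u$ is strictly less than the diameter $2R_0$. Step~2 leverages this oscillation bound to show that for all $R\ge R_1$ the chord inequality holds for pairs on $\partial B_R(x_0)$: if $\overline{xy}$ misses $B_{R_0}(x_0)$ it is immediate, and if it comes close then $|x-y|$ is forced to be large while $|u(x)-u(y)|$ is controlled by the oscillation. Step~3 then handles arbitrary $x,y$ outside $B_{R^*}$ by splitting $\overline{xy}$ at its intersection with $\partial B_{R_1}(x_0)$ and using the triangle inequality. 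The whole argument is a few lines of arithmetic with the $1$-Lipschitz bound; no limiting object is ever produced.
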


\begin{proof}
\textit{Step 1.} We first show that there exists a ball $B_{R_{0}}%
(x_{0})\supset A$ such that on the boundary $osc_{x\in\partial B_{R_{0}}%
(x_{0})}u(x)<2R_{0}.$ Without loss of generality we assume $A\subset B_{1}$.
We suppose $osc_{\partial B_{100}}u(x)\geq200$ with $\max_{\partial B_{100}%
}u(x)=u(100e_{1})$ and we will show that $osc_{\partial B_{200}(100e_{2}%
)}u(x)<400$.

Suppose $\max_{\partial B_{100}}u(x)=-\min_{\partial B_{100}}u(x)$ because
otherwise we can consider $u-(\max_{\partial B_{100}}u+\min_{\partial B_{100}%
}u)/2$ in place of $u$. Firstly one can see that $osc_{\partial B_{100}%
}u(x)\leq202$ from the Lipschitz condition on $u$ and the geometry of $\bar
{B}_{100}\backslash B_{1}$. So $100\leq u(100e_{1})\leq101$ and $\min
_{\partial B_{100}}u(x)\in(-101,-100)$. Suppose $u(x_{1})=\min_{\partial
B_{100}}u(x)$ for some $x_{1}\in\partial B_{100}$. Then $|x_{1}-(-100e_{1}%
)|\leq3$ because $u(x)>u(100e_{1})-|100e_{1}-x|>100-200=-100$ for any
$x\in\partial B_{100}\backslash B_{3}(-100e_{1})$. Thus $u(-100e_{1}%
)\in(-104,-97)$. Therefore $u(100e_{2})>u(100e_{1})-|100e_{1}-100e_{2}%
|\geq100-100\sqrt{2}>-42$ and $u(100e_{2})<u(-100e_{1})+|-100e_{1}%
-100e_{2}|<-97+100\sqrt{2}<45$. In the same way, $u(-100e_{2})\in(-42,45)$.
Denote $u(100e_{2})=M$. Then $u(x)\in(M-90,M+90)$ for all $x\in B_{3}%
(-100e_{2})$. Let $\max_{x\in\partial B_{200}(100e_{2})\backslash
B_{3}(-100e_{2})}|u(x)-M|:=Q<200$. Therefore $osc_{\partial B_{200}(100e_{2}%
)}u(x)\leq2\max(Q,90)<400$. (See Figure 1.)\begin{figure}[ptb]
\centering
\includegraphics[width=0.9\linewidth]{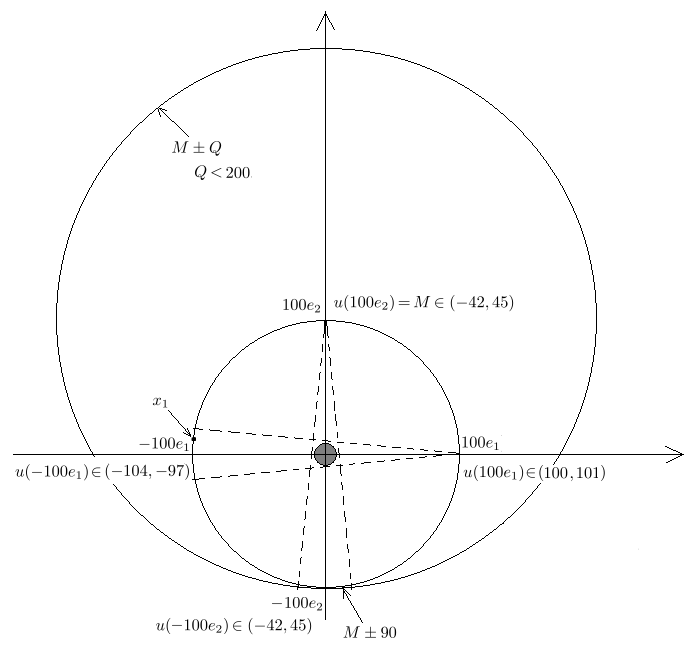}\newline\caption{Shift the ball to
hide the shadow}%
\end{figure}

\textit{Step 2.} We show that there exists $R_{1}>R_{0}$ such that for all
$R\geq R_{1}$ we have $|u(x)-u(y)|<|x-y|$ for all $x,y\in\partial B_{R}%
(x_{0})$ with $x\neq y.$ By making a suitable transformation, we may assume
$x_{0}=0$, $R_{0}=1$ and $\max_{\partial B_{1}}u=-\min_{\partial B_{1}%
}u=1-\epsilon_{0}$ for some $0<\epsilon_{0}<1$. Then for $R>1$, $\max
_{\partial B_{R}}|u|\leq R-\epsilon_{0}$. For $x,y\in\partial B_{R}$ with
$x\neq y$, if the line segment $\overline{xy}\subset\bar{B}_{R}\backslash
B_{1}$ then $|u(x)-u(y)|<|x-y|$. Otherwise, $dist(0,\overline{xy})<1$ and
$|x-y|>2\sqrt{R^{2}-1}$. If $R\geq\frac{1+\epsilon_{0}^{2}}{2\epsilon_{0}}$
then $|u(x)-u(y)|\leq2(R-\epsilon_{0})\leq2\sqrt{R^{2}-1}<|x-y|.$

\textit{Step 3.} Set $R^{\ast}:=|x_{0}|+R_{1}.$ Suppose the line segment
$\overline{xy}\cap\partial B_{R_{1}}(x_{0})=\{p,q\}$ and $p$ is closer to $x$
than $q$. Then $|u(x)-u(y)|\leq
|u(x)-u(p)|+|u(p)-u(q)|+|u(q)-u(y)|<|x-p|+|p-q|+|q-y|=|x-y|$. If $p=q$, the
conclusion is also true. If $\overline{xy}\cap\partial B_{R_{1}}%
(x_{0})=\emptyset,$ we have $|u(x)-u(y)|<|x-y|$ directly.
\end{proof}

For completeness, we include the promised full spacelike extension result
here, which is not needed in the proof of our two main theorems.

\begin{thm}
Let $u$ be a spacelike function in $\mathbb{R}^{n}\backslash A$ with $A$ being
bounded. Then there exists $R^{\ast}>0$ such that $\left\vert u\left(
x\right)  -u\left(  y\right)  \right\vert <|x-y|$ for all $x,y\in$
$\mathbb{R}^{n}\backslash B_{R^{\ast}}.$ Moreover, there exists a spacelike
function $\tilde{u}$ in $\mathbb{R}^{n}$ such that $\tilde{u}=u$ in
$\mathbb{R}^{n}\backslash B_{R^{\ast}}.$
\end{thm}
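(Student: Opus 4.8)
The plan is to keep the radius $R^{\ast}$ produced by Theorem 3.1 — recall that $A\subset B_{R^{\ast}}$ and $\left\vert u(x)-u(y)\right\vert <|x-y|$ for all $x,y\in\mathbb{R}^{n}\backslash B_{R^{\ast}}$, which is already the first assertion — and to fill the hole $B_{R^{\ast}}$ with the unique maximal surface over it carrying the boundary data $g:=u|_{\partial B_{R^{\ast}}}$. Since $\partial B_{R^{\ast}}\subset\mathbb{R}^{n}\backslash B_{R^{\ast}}$, Theorem 3.1 gives that $g$ is bounded and $|g(x)-g(y)|<|x-y|$ for all distinct $x,y\in\partial B_{R^{\ast}}$; in particular $g$ is $1$-Lipschitz, so the McShane extensions $\overline{g}(x):=\inf_{z\in\partial B_{R^{\ast}}}(g(z)+|x-z|)$ and $\underline{g}(x):=\sup_{z\in\partial B_{R^{\ast}}}(g(z)-|x-z|)$ are $1$-Lipschitz, hence weakly spacelike, functions on $\overline{B}_{R^{\ast}}$ equal to $g$ on $\partial B_{R^{\ast}}$. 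Thus the class $K$ in Theorem 2.1 is nonempty and we obtain the unique variational solution $v$ on $B_{R^{\ast}}$ with boundary values $g$.

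I would then collect the properties of $v$. Because $g$ admits a weakly spacelike extension and $|g(x)-g(y)|<|x-y|$ for all distinct $x,y\in\partial B_{R^{\ast}}$, Remark 2.2 applies: $sing\,v=\emptyset$, so $v$ is smooth on $B_{R^{\ast}}$ and solves (\ref{Ediv}) there, whence $|Dv|<1$ in $B_{R^{\ast}}$. For the boundary values I would use that any $w\in K$ satisfies $\underline{g}\leq w\leq\overline{g}$ on $B_{R^{\ast}}$ (a $1$-Lipschitz function with boundary trace $g$ in the sense of Remark 2.1 is squeezed between the two McShane extensions, by letting the interior point tend to a boundary point along a radius); applying this to $v$ and using $\underline{g},\overline{g}\in C(\overline{B}_{R^{\ast}})$ with $\underline{g}=\overline{g}=g$ on $\partial B_{R^{\ast}}$, a sandwich argument gives $v\in C(\overline{B}_{R^{\ast}})$ with $v=g$ on $\partial B_{R^{\ast}}$, and $v$ is then $1$-Lipschitz on $\overline{B}_{R^{\ast}}$.

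Next I would verify that $v$ is strictly spacelike on the closed ball. Two interior points are handled by $|Dv|<1$ and convexity of $B_{R^{\ast}}$; two boundary points by the strict inequality for $g$. If $x\in\partial B_{R^{\ast}}$, $y\in B_{R^{\ast}}$ and $|v(x)-v(y)|=|x-y|$, then rigidity of the triangle inequality for the $1$-Lipschitz function $v$ forces $t\mapsto v((1-t)x+ty)$ to be affine with slope $\pm|x-y|$; but $\overline{xy}\backslash\{x\}\subset B_{R^{\ast}}$ by convexity, so at interior points of the segment the directional derivative of $v$ has modulus $1$, contradicting $|Dv|<1$. Hence $|v(x)-v(y)|<|x-y|$ for all distinct $x,y\in\overline{B}_{R^{\ast}}$.

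Finally I would glue: set $\tilde u:=v$ on $\overline{B}_{R^{\ast}}$ and $\tilde u:=u$ on $\mathbb{R}^{n}\backslash B_{R^{\ast}}$; these agree on $\partial B_{R^{\ast}}$, so $\tilde u$ is well defined, continuous, and equals $u$ on $\mathbb{R}^{n}\backslash B_{R^{\ast}}$. For distinct $x,y\in\mathbb{R}^{n}$: if both lie in $\overline{B}_{R^{\ast}}$ or both lie in $\mathbb{R}^{n}\backslash B_{R^{\ast}}$, the estimate $|\tilde u(x)-\tilde u(y)|<|x-y|$ follows from the previous step and from Theorem 3.1 respectively; otherwise, say $x\in B_{R^{\ast}}$ and $y\in\mathbb{R}^{n}\backslash\overline{B}_{R^{\ast}}$, and $\overline{xy}$ meets $\partial B_{R^{\ast}}$ at some $p\neq x,y$, so
\[
|\tilde u(x)-\tilde u(y)|\leq|v(x)-v(p)|+|u(p)-u(y)|<|x-p|+|p-y|=|x-y|
\]
by strict spacelikeness of $v$ on $\overline{B}_{R^{\ast}}$ and by Theorem 3.1 (valid since $p\in\partial B_{R^{\ast}}\subset\mathbb{R}^{n}\backslash B_{R^{\ast}}$). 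Thus $\tilde u$ is the desired spacelike extension. I expect the only genuinely delicate points to be the passage to the boundary — the continuous attainment of $g$ via the McShane sandwich and the preservation of strict spacelikeness across $\partial B_{R^{\ast}}$ through triangle-inequality rigidity; everything else is bookkeeping with Theorem 3.1 and the Bartnik--Simon solvability theorem.
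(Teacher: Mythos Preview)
Your proof is correct and follows essentially the same route as the paper: invoke Theorem 3.1 for the first assertion, fill $B_{R^{\ast}}$ with the Bartnik--Simon solution for the boundary data $u|_{\partial B_{R^{\ast}}}$ (the paper packages this as a single appeal to Remark 2.2), and glue via the triangle inequality through a point of $\partial B_{R^{\ast}}$. Your explicit McShane sandwich for continuous boundary attainment and the rigidity argument for the mixed boundary/interior case are details the paper leaves implicit; note also that the paper records, as an aside, two more elementary fill-ins (an $m$-Lipschitz McShane extension with $m=\|Du\|_{L^{\infty}(\partial B_{R^{\ast}})}<1$, and a radial cone extension) that bypass the Bartnik--Simon machinery under the mild extra hypothesis $|Du|<1$.
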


\begin{proof}
We only need to prove the second part of the theorem. By Remark 2.2, there
exists a spacelike function $w$ in $B_{R^{\ast}}$ such that $w=u$ on $\partial
B_{R^{\ast}}.$ Define $\tilde{u}:=w$ in $B_{R^{\ast}}$ and $\tilde{u}:=u$ in
$\mathbb{R}^{n}\backslash B_{R^{\ast}}$. For $x,y\in\mathbb{R}^{n}$ with
$x\neq y,$ if both $x$ and $y$ are in $\bar{B}_{R^{\ast}}$ or $\mathbb{R}%
^{n}\backslash B_{R^{\ast}}$ then $|u(x)-u(y)|<|x-y|$. Otherwise, let
$\{z\}=\overline{xy}\cap\partial B_{R^{\ast}}$, then $|u(x)-u(y)|\leq
|u(x)-u(z)|+|u(z)-u(y)|<|x-z|+|z-y|=|x-y|.$

If we assume the spacelike function $u$ is also strictly spacelike $\left\vert
Du\left(  x\right)  \right\vert <1$ (to exclude spacelike functions such as
$\arctan\lambda$), we can get a spacelike extension inside $B_{R^{\ast}}$
directly, without relying on the singularity analysis of variational solutions
to the maximal surface equations of [BS82] contained in Remark 2.2.

In fact (cf. [LY, p.61]), for $x\in\bar{B}_{R^{\ast}}$ set%
\[
w\left(  x\right)  =\inf_{b\in\partial B_{R^{\ast}}}\left\{  u\left(
b\right)  +m\left\vert x-b\right\vert \right\}
\]
with $m=\left\Vert Du\right\Vert _{L^{\infty}\left(  \partial B_{R^{\ast}%
}\right)  }<1.$ Then $w\left(  x\right)  =u\left(  x\right)  $ for
$x\in\partial B_{R^{\ast}}.$ And for $x,y\in\bar{B}_{R^{\ast}}$%
\begin{align*}
w\left(  y\right)   &  =\inf_{b\in\partial B_{R^{\ast}}}\left\{  u\left(
b\right)  +m\left\vert y-b\right\vert \right\} \\
&  \leq\inf_{b\in\partial B_{R^{\ast}}}\left\{  u\left(  b\right)
+m\left\vert x-b\right\vert +m\left\vert y-x\right\vert \right\} \\
&  \leq w\left(  x\right)  +m\left\vert y-x\right\vert .
\end{align*}
Symmetrically $w\left(  x\right)  \leq w\left(  y\right)  +m\left\vert
x-y\right\vert .$ Hence $w$ is spacelike inside $B_{R^{\ast}},$ $\left\vert
u\left(  x\right)  -u\left(  y\right)  \right\vert <m\left\vert x-y\right\vert
<\left\vert x-y\right\vert .$

There is another differential way to do this extension inside $B_{R^{\ast}}.$
Without loss of generality, we assume $R^{\ast}=1,$ then $osc_{\left\vert
x\right\vert =1}u(x)<2.$ For $x\in\bar{B}_{1}$ set%
\[
w\left(  x\right)  =\left\vert x\right\vert \left[  u\left(  x/\left\vert
x\right\vert \right)  -m\right]  +m
\]
with $m=\frac{1}{2}\left[  \max_{\left\vert x\right\vert =1}u\left(  x\right)
+\min_{\left\vert x\right\vert =1}u\left(  x\right)  \right]  .$ Then
$w\left(  x\right)  =u\left(  x\right)  $ on $\partial B_{1}.$ And for $x\in
B_{1}\backslash\left\{  0\right\}  ,$%
\[
\left\vert Dw\left(  x\right)  \right\vert =\left\vert Du\left(  x/\left\vert
x\right\vert \right)  \right\vert <1.
\]
The Lipschitz norm of $w$ at $x=0$ is also less than one because%
\[
\left\vert u\left(  x/\left\vert x\right\vert \right)  -m\right\vert \leq
\frac{\max_{\left\vert x\right\vert =1}u\left(  x\right)  -\min_{\left\vert
x\right\vert =1}u\left(  x\right)  }{2}<1.
\]
We also reach the same spacelike conclusion of $w$ inside $B_{1}.$
\end{proof}

\section{Growth control of $u$ at infinity}

In this section, we show that the linear growth rate of an exterior solution
$u$ at infinity is uniformly less than one, that is to say, $u$ is controlled
not only by the light cone but by a cone with slop less than one. Meanwhile we
prove that the blowdown of $u$ is unique and is a linear function with slope
less than one. We also proved that the graph of $u$ is supported by a
hyperplane either from below or from above.

\begin{thm}
Let $u$ be an exterior solution in $\mathbb{R}^{n}\backslash A$ with
$A\ $being bounded. Then there exist $B_{R}\supset A$, $0<\epsilon<1$ and
$c_{0}\in\mathbb{R}$ such that
\[
-(1-\epsilon)|x|\leq u(x)-c_{0}\leq(1-\epsilon)|x|
\]
in $\mathbb{R}^{n}\backslash B_{R}$. Moreover, there exists a vector $a\in
\bar{B}_{1-\epsilon}$ such that
\[
\lim_{r\rightarrow\infty}\frac{u(rx)}{r}=a\cdot x\quad
\mathrm{locally\ uniformly\ in}\ \mathbb{R}^{n}.
\]
The function $u$ also enjoys the property that either for some $c\in
\mathbb{R}$, $u(x)\geq a\cdot x+c$ in $\mathbb{R}^{n}\backslash B_{R}$ and
$u(y)=a\cdot y+c$ at some point $y\in\partial B_{R}$ or for some
$c\in\mathbb{R}$, $u(x)\leq a\cdot x+c$ in $\mathbb{R}^{n}\backslash B_{R}$
and $u(y)=a\cdot y+c$ at some point $y\in\partial B_{R}$.
\end{thm}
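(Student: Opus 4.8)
The plan is to exploit the blowdown analysis and Ecker's isolated singularity theorems. First I would apply Theorem 3.1 to finitely enlarge $A$ to a ball $B_{R^{\ast}}$ so that $u$ (extended spacelikely, as in Theorem 3.2) is weakly spacelike on all of $\mathbb{R}^n$; after normalizing we may take $u(0)=0$, so by the linear growth of spacelike functions $|u(x)|\le |x|+C$. The normalized rescalings $u_r(x)=r^{-1}u(rx)$ are uniformly Lipschitz, so by Arzelà–Ascoli every sequence $r_j\to\infty$ has a subsequence converging locally uniformly to a blowdown $u_\infty$, which by Lemma 2.1 is area maximizing on $\mathbb{R}^n$ (the hole $A$ has shrunk to $\{0\}$, but weak spacelikeness forces $u_\infty$ to extend area-maximizingly across $0$, unless it has an isolated singularity there). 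By Bartnik's Bernstein theorem (Theorem 2.3), any area-maximizing blowdown that is genuinely entire is a weakly spacelike hyperplane through the origin, i.e. $u_\infty(x)=a\cdot x$ with $|a|\le 1$.

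The crux is to rule out (i) $|a|=1$ and (ii) a blowdown with an isolated singularity, i.e.\ a light cone $C_0^\pm$. Here I would argue by contradiction following Ecker [Ec86]. Suppose some blowdown is $C_0^+$ (the cone $u_\infty(x)=|x|$) or a hyperplane of slope one. Using the radial barriers $w_\lambda$ of (\ref{SolRadial}) — whose graphs stay uniformly below the light cone by a definite amount on compact sets away from $0$ — together with the comparison principle for (\ref{Ediv}) on the annulus $B_R\setminus B_{R^\ast}$, one obtains a uniform bound $u(x)\le (1-\epsilon)|x|+c_0$ for $|x|$ large: the point is that if $u$ touched the light cone asymptotically along some direction, then the area-maximizing extension would, by the segment theorem [BS82, Theorem 3.2] quoted before Theorem 2.2, contain an entire light ray, and the blowdown inherits the full cone; then Ecker's Lemma 2.2 (our Lemma, [Ec86, Lemma 1.10]) forces $u_\infty=C_0^\pm$. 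But $C_0^\pm$ is not a hyperplane, contradicting that \emph{some} blowdown (the one produced by a possibly different sequence) must be consistent with the area-maximizing structure of the single function $u$; more directly, comparing $u$ against a large Lorentz-boosted hyperplane of slope strictly less than one through a point of $\partial B_{R^\ast}$, which is a supersolution, pins $u$ below $(1-\epsilon)|x|$ once we know the hole is bounded. The symmetric argument with $-u$ gives the lower bound, establishing the two-sided cone bound with a single $\epsilon$ and constant $c_0$.

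Once the growth bound holds, all blowdowns are weakly spacelike hyperplanes $a\cdot x$ with $|a|\le 1-\epsilon$. To upgrade "every subsequential blowdown is linear" to "the blowdown is unique," I would use the growth bound to make equation (\ref{Ediv}) uniformly elliptic at large scales (the gradient of $u$ is bounded away from $1$ on dyadic annuli, via interior gradient estimates as foreshadowed in Section 5, or directly via Cheng–Yau's second fundamental form estimate applied on Lorentz balls), and then invoke a standard Liouville/Harnack argument: two linear blowdowns $a_1\cdot x$ and $a_2\cdot x$ would force $u(rx)/r$ to oscillate, contradicting that $u_r - a_i\cdot x$ is controlled by a harmonic-type estimate on annuli. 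This gives a well-defined $a\in\bar B_{1-\epsilon}$ with $u(rx)/r\to a\cdot x$ locally uniformly. Finally, for the supporting-hyperplane property: consider $\sup\{c: u(x)\ge a\cdot x + c \text{ on } \mathbb{R}^n\setminus B_R\}$. Since $u(x)-a\cdot x$ is bounded (as $u(rx)/r\to a\cdot x$ and finer asymptotics from the uniformly elliptic equation show $u(x)-a\cdot x$ has a limit or is bounded), this sup is finite; call it $c$. If the infimum of $u(x)-a\cdot x-c$ over $\mathbb{R}^n\setminus B_R$ were attained only in the limit $|x|\to\infty$ and equalled $0$ there, that is consistent; but if it is bounded below by a positive constant on $\partial B_R$ and does not decay, maximality of $c$ is contradicted — so either the infimum is attained at some $y\in\partial B_R$ with equality, or $u-a\cdot x - c\to 0$ at infinity, in which case one enlarges $R$ slightly and applies the strong maximum principle / Hopf lemma (valid since the equation is uniformly elliptic there) to the nonnegative solution-difference $u - (a\cdot x+c)$ to conclude it must vanish at a boundary point, giving the touching hyperplane from below. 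Running the same argument with $\inf$ in place of $\sup$ gives the "from above" alternative.

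I expect the main obstacle to be step (ii): cleanly excluding a light-cone blowdown and transferring that to a \emph{uniform} $\epsilon$-cone bound for $u$ itself. This is exactly where Ecker's isolated-singularity machinery and the segment theorem of [BS82] must be combined with the comparison principle against the radial barriers $w_\lambda$ and Lorentz-boosted hyperplanes; getting the quantitative $\epsilon>0$ (rather than merely slope $\le 1$) requires a compactness argument showing that no sequence of rescalings can approach the cone, which is the delicate part.
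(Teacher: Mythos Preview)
Your overall architecture (blowdown $\to$ Bernstein/Ecker $\to$ cone bound $\to$ unique linear blowdown $\to$ supporting plane) matches the paper's, but the proposal has a genuine gap at exactly the place you flag as ``the main obstacle,'' and it is missing the paper's key device.

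\textbf{What is missing.} The paper introduces a single construction that does most of the work: for each $j$, solve the Dirichlet problem for (\ref{Ediv}) on $B_j$ with boundary data $u|_{\partial B_j}$, obtaining entire (on $B_j$) maximal graphs $w_j$; translate each $w_j$ vertically so that it touches $u$ from below (or above) at a point of $\partial B_1$; pass to a limit $W$, which by Bernstein is a plane with $W\le u$ in $\mathbb{R}^n\setminus B_1$ and $W(z)=u(z)$ at some $z\in\partial B_1$. This one construction (i) gives the supporting hyperplane with the touching point on $\partial B_R$ by \emph{construction} (no sup argument, no boundedness of $u-a\cdot x$ needed), (ii) makes uniqueness of the blowdown immediate, since any blowdown $V$ is sandwiched $a\cdot x\le V(x)\le (1-\epsilon)|x|$, forcing $0$ to be a removable singularity and hence $V\equiv a\cdot x$, and (iii) disposes of the ``two-sided'' case (your unaddressed situation where $u$ approaches both the upper and the lower light cone along different sequences): there the limit plane would have slope exactly $1$ while touching $u$ on $\partial B_1$, and this contradicts the strict spacelikeness $|u(x)-u(y)|<|x-y|$ on $\partial B_1$ obtained from Theorem 3.1. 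You have no substitute for this construction; your sup argument for the touching plane presupposes that $u-a\cdot x$ is bounded, which in the paper is only established later (Section 6) and rests on Theorem 4.1 itself, and your uniqueness-via-Harnack likewise borrows the gradient estimate of Section 5, which depends on Theorem 4.1.

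\textbf{Why your exclusion of the light-cone blowdown does not close.} Saying ``$C_0^\pm$ is not a hyperplane, contradicting that some blowdown must be consistent'' is not an argument: different sequences may a priori give different blowdowns. And comparing $u$ against a hyperplane of slope $<1$ cannot ``pin $u$ below $(1-\epsilon)|x|$'' without boundary control at infinity, which is exactly what you are trying to prove. The paper's argument in the one-sided case is concrete: once you know the blowdown along $\{x_j\}$ is exactly $|x|$ (via Ecker's Lemma), you compare $u$ on the annulus $B_{R_j}\setminus B_1$ against the genuine \emph{subsolution} $w(x)=\min_{\partial B_1}u-1+\delta+(1-\delta)|x|$; the outer boundary inequality holds for large $j$ because $u(R_j\,\cdot)/R_j\to|\cdot|$ uniformly on $\partial B_1$. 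Sending $\delta\to 0$ gives $u(x)\ge \min_{\partial B_1}u-1+|x|$, and evaluating at $2z$ with $u(z)=\min_{\partial B_1}u$ yields $u(2z)-u(z)\ge 1=|2z-z|$, contradicting strict spacelikeness from Theorem 3.1. The radial barriers $w_\lambda$ are not used here; the cone $(1-\delta)|x|+c$ is the right comparison.
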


\begin{proof}
We apply Theorem 3.1. For simplicity of notation, we assume $R^{\ast}=1$. So
we have $|u(x)-u(y)|<|x-y|$ for any $x,y\in\mathbb{R}^{n}\backslash B_{1}$
with $x\neq y$. We also assume $\max_{\partial B_{1}}u=-\min_{\partial B_{1}%
}u=1-\epsilon_{1}$ for some $0<\epsilon_{1}<1$. We will show that
$-(1-\epsilon)|x|\leq u(x)\leq(1-\epsilon)|x|$ in $\mathbb{R}^{n}\backslash
B_{1}$ for some $0<\epsilon<1$.

It is easy to see that $-|x|+\epsilon_{1}\leq u(x) \leq|x|-\epsilon_{1}$ in
$\mathbb{R}^{n}\backslash B_{1}$. So there are four possibilities for $u$:

\textit{(a)} There is $0<\epsilon<1$ such that $u(x)\geq-(1-\epsilon)|x|$ in
$\mathbb{R}^{n}\backslash B_{1}$ and there is a sequence of points $\{x_{j}\}$
with $1<|x_{j}|:=R_{j}\rightarrow+\infty$ such that $u(x_{j})>(1-\frac{1}%
{j})|x_{j}|$;

\textit{(b)} The function $-u$ satisfies (a);

\textit{(c)} There are two sequences of points $\{x^{\pm}_{j}\}$ with
$1<|x^{\pm}_{j}|:=R^{\pm}_{j}\rightarrow+\infty$ such that $u(x^{+}%
_{j})>(1-\frac{1}{j})|x^{+}_{j}|$ and $u(x^{-}_{j})<-(1-\frac{1}{j})|x^{-}%
_{j}|$;

\textit{(d)} There is $0<\epsilon<1$ such that $-(1-\epsilon)|x|\leq u(x)
\leq(1-\epsilon)|x|$ in $\mathbb{R}^{n}\backslash B_{1}$.

We will show that the cases (a)(b)(c) can not happen.

Suppose that $u$ satisfies (a). Let $\hat{x}:=\lim_{k\rightarrow\infty}%
\frac{x_{j_{k}}}{R_{j_{k}}}\in\partial B_{1}$ for some subsequence $\{j_{k}%
\}$. We assume $\hat{x}=e_{n}$ and consider $\{j_{k}\}$ as $\{j\}$. Define
$v_{j}(x):=\frac{u(R_{j}x)}{R_{j}}$. A subsequence of $v_{j}(x)$ (still
denoted as $v_{j}(x)$) converge locally uniformly to a function $V(x)$ in
$\mathbb{R}^{n}\backslash\{0\}$. By Lemma 2.1, $V(x)$ is \textit{area
maximizing} in $\mathbb{R}^{n}\backslash\{0\}$. It is obvious that $V(0)=0$,
$V(e_{n})=1$ and $V(x)\geq-(1-\epsilon)|x|$. Thus $V(te_{n})=t$ for
$t\in(0,+\infty)$ by weakly spacelikeness and Theorem 2.1. If $0$ is a
removable singularity for $V$, then $V$ is a plane by Theorem 2.3 and $V$ has
to be $V(x)=x_{n}$ that contradicts $V(x)\geq-(1-\epsilon)|x|$. So $0$ is an
isolated singularity for $V$. Let $V_{\infty}$ be a blowdown of $V$, then
$V_{\infty}(te_{n})=t$ for $t\in(0,+\infty)$ and $V_{\infty}(x)\geq
-(1-\epsilon)|x|$. So $0$ is an isolated singularity for $V_{\infty}.$ By
Lemma 2.2, we have $V(x)=|x|$.

Let $z\in\partial B_{1}$ be such that $u(z)=\min_{\partial B_{1}}u:=\lambda$.
For small $\delta>0$, consider $w(x):=\lambda-1+\delta+(1-\delta)|x|$. Since
$\lim_{j\rightarrow\infty}\frac{u(R_{j}x)}{R_{j}}=|x|$ uniformly on $\partial
B_{1}$, for sufficiently large $j$, $u(x)\geq w(x)$ on $\partial B_{R_{j}}$.
But $u(x)\geq\lambda=w(x)$ on $\partial B_{1}$ and $w(x)$ is a subsolution to
(\ref{Ediv}) in $B_{R_{j}}\backslash\bar{B}_{1}$, so $u(x)\geq w(x)$ in
$B_{R_{j}}\backslash\bar{B}_{1}$. Let $\delta\rightarrow0$, we get
$u(x)\geq\lambda-1+|x|$ in $B_{R_{j}}\backslash\bar{B}_{1}$. Especially,
$u(2z)\geq\lambda-1+|2z|=\lambda+1$ and hence $u(2z)-u(z)\geq1=|2z-z|$. This
contradicts the fact that $u$ is spacelike \textquotedblleft in $\mathbb{R}%
^{n}\backslash B_{1}"$ proved in Theorem 3.1.

The case (b) can not happen for the same reason.

Now we suppose $u$ satisfies (c). For each $j$, let $w_{j}$ be the solution of
(\ref{Ediv}) in $B_{j}$ with $w_{j}=u$ on $\partial B_{j}$. The existence of
$w_{j}$ is due to Remark 2.2. For each $j$, either $\max_{\partial B_{1}%
}\left(  w_{j}-u\right)  \geq0$ or $\min_{\partial B_{1}}\left(
w_{j}-u\right)  \leq0$ (or both). Thus $\max_{\partial B_{1}}\left(
w_{j}-u\right)  \geq0$ or $\min_{\partial B_{1}}\left(  w_{j}-u\right)  \leq0$
happens for infinitely many $j$. We assume $\max_{\partial B_{1}}\left(
w_{j}-u\right)  :=\lambda_{j}\geq0$ happens for infinitely many $j$. Let
$z_{j}\in\partial B_{1}$ be such that $w_{j}(z_{j})-u(z_{j})=\lambda_{j}$ and
consider $\tilde{w}_{j}=w_{j}(z_{j})-\lambda_{j}$ for these $j$. So $\tilde
{w}_{j}\leq u$ in $B_{j}\backslash B_{1}$ and $\tilde{w}_{j}(z_{j})=u(z_{j})$.
Note that $|\tilde{w}_{j}(0)|\leq|\tilde{w}_{j}(z_{j})|+1=|u_{j}(z_{j}%
)|+1\leq2$ for all these $j$. Therefore, by Arzela-Ascoli a subsequence
$\tilde{w}_{j_{k}}$ converge locally uniformly to a function $W$ in
$\mathbb{R}^{n}$. By Lemma 2.1, $W$ is an area maximizing surface. So it is a
plane with slope less than or equal to one by Theorem 2.3. Furthermore, we
know $W\leq u$ in $\mathbb{R}^{n}\backslash B_{1}$ and $W(z)=u(z)$ by
continuity, where $z$ is an accumulating point of $\{z_{j_{k}}\}$.

By assumption of (c), there are $\{x_{j}^{-}\}$ with $|x_{j}^{-}%
|\rightarrow+\infty$ such that $W(x_{j}^{-})\leq u(x_{j}^{-})<-(1-\frac{1}%
{j})|x_{j}^{-}|$. Thus $W$ has to be a plane with slope 1. We assume
$DW(x)=e_{n}$, so $W(x)=x_{n}+u(z)-z_{n}$. If $z_{n}<0$, then denote
$\tilde{z}=(z^{\prime},-z_{n})\in\partial B_{1}$ and we have $u(\tilde{z})\geq
W(\tilde{z})=-2z_{n}+u(z)=u(z)+|\tilde{z}-z|$. This contradicts the fact that
$|u(x)-u(y)|<|x-y|$ for any $x,y\in\partial B_{1}$ with $x\neq y$. If
$z_{n}\geq0$, then consider the point $z+e_{n}\in\mathbb{R}^{n}\backslash
\bar{B}_{1}$ and we have $u(z+e_{n})\geq W(z+e_{n})=u(z)+1=u(z)+|(z+e_{n}%
)-z|$. This contradicts the fact that $u$ is spacelike \textquotedblleft in
$\mathbb{R}^{n}\backslash B_{1}"$ proved in Theorem 3.1.

If it is the case that $\min_{\partial B_{1}}\left(  w_{j}-u\right)  \leq0$
happens for infinitely many $j$, we move up $w_{j}$ by $-\min_{\partial B_{1}%
}\left(  w_{j}-u\right)  $ and get a plane $\hat{W}$ above $u$ by the same
process. This time by the assumption that there are $\{x_{j}^{+}\}$ with
$|x_{j}^{+}|\rightarrow+\infty$ such that $\hat{W}(x_{j}^{+})\geq u(x_{j}%
^{+})>(1-\frac{1}{j})|x_{j}^{+}|$, we also know the slope of $\hat{W}$ is one.
Furthermore, $\hat{W}$ also touches $u$ at some point of $\partial B_{1}$.
Again, this contradicts the fact that $u$ is spacelike \textquotedblleft in
$\mathbb{R}^{n}\backslash B_{1}"$ proved in Theorem 3.1.

Therefore only the case (d) can (and must) happens and we have proved the
first part of the theorem. In this case we can also construct the plane $W$ in
the same way just as we did in the first paragraph when we proved the
impossibility of case (c). That is to say, we can place a plane (with slope
less than or equal to $1-\epsilon$) either below or above the graph of $u$ in
$\mathbb{R}^{n}\backslash B_{1}$ and the plane touches $u$ at some point of
$\partial B_{1}$. This property implies that the blowdown of $u$ must be
unique and equal to the blowdown of $W$. We show this as follows. Assume that
$W(x)=c+a\cdot x\leq u$ in $\mathbb{R}^{n}\backslash B_{1}$ where
$|a|\leq1-\epsilon$. Let $V$ be any blowdown of $u$, then $a\cdot x\leq
V(x)\leq(1-\epsilon)|x|$ in $\mathbb{R}^{n}$, which implies that $0$ is a
removable singularity of $V$ by Ecker stated in Lemma 2.2. Then $V$ is an
entire solution and must be a plane. The only possible situation is
$V(x)=a\cdot x$.
\end{proof}

\section{Gradient estimate}

With the strong growth control achieved in the previous section and the known
curvature estimate, we can establish the gradient estimate and ascertain
$Du(\infty)$ in this section. We state the curvature estimate of Cheng-Yau
[CY76] in the following improved extrinsic form carried out by Schoen (see
[Ec86, Theorem 2.2]).

\begin{thm}
Let $M=\left(  x,u\left(  x\right)  \right)  $ be a maximal hypersurface,
$x_{0}\in M$ and assume that for some $\rho>0$, $L_{2\rho}(x_{0})\cap
M\subset\subset M$. Then we have for all $x\in L_{\rho}(x_{0})$
\begin{equation}
|II|^{2}(x)\leq\frac{c(n)\rho^{2}}{(\rho^{2}-l_{x_{0}}^{2}(x))^{2}}
\label{EstCurvature}%
\end{equation}
where $c(n)$ is a constant depending only on the dimension $n$ and $l_{x_{0}%
}(x)=(|x-x_{0}|^{2}-|u(x)-u(x_{0})|^{2})^{\frac{1}{2}}.$
\end{thm}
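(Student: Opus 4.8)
The statement is the Cheng--Yau interior curvature estimate in the extrinsic form attributed to Schoen, so the plan is to prove it by a maximum principle on $M$ built around the favourable sign the maximal equation forces on the Simons identity. \textbf{Step 1 (two identities on a maximal spacelike $M$).} Because the ambient $\mathbb{L}^{n+1}$ is flat and $M$ is spacelike, the Gauss equation makes the Simons identity for the second fundamental form read $\tfrac12\Delta_M|II|^2=|\nabla_M II|^2+|II|^4$ (contrast the $-|II|^4$ term for minimal hypersurfaces in $\mathbb{R}^{n+1}$), so in particular
\[
\Delta_M|II|^2\ \geq\ 2|II|^4 .
\]
Next, with $X=(x,u(x))$ and $X_0=(x_0,u(x_0))$, the Lorentz ``squared distance''
\[
\ell^2(x):=\langle X-X_0,X-X_0\rangle=|x-x_0|^2-|u(x)-u(x_0)|^2=l_{x_0}^2(x)
\]
is nonnegative on the spacelike $M$, and since maximality gives $\Delta_M X=0$ componentwise one has $\Delta_M\ell^2=2n$ and $\nabla_M\ell^2=2(X-X_0)^{\top}$, so $|\nabla_M\ell^2|^2=4(\ell^2+\Phi^2)$, where $\Phi:=\langle X-X_0,N\rangle$ ($N$ the future unit normal) is the Lorentz support function. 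The same kind of computation gives $\Delta_M\Phi=|II|^2\Phi$ and $|\nabla_M\Phi|^2\leq|II|^2(\ell^2+\Phi^2)$.

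\textbf{Step 2 (maximum principle on the Lorentz ball).} The hypothesis $L_{2\rho}(x_0)\cap M\subset\subset M$ makes $\Omega:=L_\rho(x_0)\cap M=\{\ell^2<\rho^2\}$ a precompact open subset of $M$ on which everything is smooth. Apply the maximum principle to $G:=(\rho^2-\ell^2)^2|II|^2\geq 0$, which vanishes on $\partial\Omega$ and so attains its maximum at an interior point $p$, where $\nabla_M G(p)=0$ and $\Delta_M G(p)\leq 0$. Substituting $\nabla_M|II|^2=-|II|^2\nabla_M(\rho^2-\ell^2)^2/(\rho^2-\ell^2)^2$ into $\Delta_M G(p)\leq0$ and using only $\Delta_M|II|^2\geq 2|II|^4$ and $\Delta_M\ell^2=2n$, a short computation produces an inequality of the shape
\[
G(p)\ \leq\ 3|\nabla_M\ell^2|^2(p)+2n\rho^2\ =\ 12\bigl(\ell^2(p)+\Phi^2(p)\bigr)+2n\rho^2 .
\]
Since $\ell^2(p)<\rho^2$ and $G(x)\leq G(p)$ throughout $\Omega$, and since $|D^2u|\leq|II|$, the asserted estimate follows at once from a scale-invariant bound $\Phi^2\leq C(n)\rho^2$ on $\Omega$.

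\textbf{Step 3 (the support-function bound --- the main obstacle).} What remains, and what I expect to be the crux, is precisely this: control $\Phi$ on $L_\rho(x_0)\cap M$ by a multiple of $\rho$ with a constant depending only on $n$ (an $M$-dependent bound is useless, since the final estimate is scale invariant). This is where the room between $L_\rho$ and $L_{2\rho}$ must be used. From $\Delta_M\Phi=|II|^2\Phi$ one gets $\Delta_M\Phi^2=2|II|^2\Phi^2+2|\nabla_M\Phi|^2\geq0$ with $|\nabla_M\Phi|^2\leq|II|^2(\ell^2+\Phi^2)$ --- but the $|II|^2$ here is exactly the quantity being estimated, so the $\Phi$-bound and the $|II|$-bound have to be obtained together. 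Concretely I would run a single maximum-principle argument on $L_{2\rho}(x_0)\cap M$ for a combined test function --- for instance $(\rho^2-\ell^2)^2|II|^2\,\beta(\Phi)$ with $\beta$ a suitable bounded function, or a product of powers of $(4\rho^2-\ell^2)$ with $\Phi^2$ and $|II|^2$ --- choosing the cutoff exponents so that every cross term produced by $\Delta_M$ of the product is absorbed into the leading $|II|^4$ and $|\nabla_M\Phi|^2$ terms. Making this bookkeeping close with a constant $c(n)$ depending on $n$ alone is the delicate heart of the argument; it is exactly what is carried out by Cheng--Yau and sharpened by Schoen. (One could instead work with intrinsic geodesic balls, where $\mathrm{Ric}_M\geq0$ yields the Laplacian comparison $\Delta_M d^2\leq 2n$ and $|\nabla_M d^2|^2=4d^2$ is automatically bounded; but relating an intrinsic ball to $L_\rho(x_0)$ again requires controlling how tilted $M$ is, i.e.\ the gradient function $1/\sqrt{1-|Du|^2}$, so the same obstacle reappears.) Once $\sup_{L_\rho(x_0)\cap M}\Phi^2\leq C(n)\rho^2$ is in hand, Step 2 gives $G\leq c(n)\rho^2$ on $\Omega$, i.e.\ $|II|^2(x)\leq c(n)\rho^2\,(\rho^2-l_{x_0}^2(x))^{-2}$ for all $x\in L_\rho(x_0)$, as claimed.
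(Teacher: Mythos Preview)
The paper does not prove this statement at all: Theorem~5.1 is quoted from the literature (Cheng--Yau [CY76], in the extrinsic form recorded by Schoen in [Ec86, Theorem~2.2]) and used as a black box. So there is no ``paper's own proof'' to compare against.

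As to your sketch itself: Steps~1 and~2 are the standard set-up and are essentially correct --- the Simons identity on a maximal spacelike hypersurface does carry the favorable sign $\Delta_M|II|^2\geq 2|II|^4$, and the Lorentzian distance function satisfies $\Delta_M\ell^2=2n$, $|\nabla_M\ell^2|^2=4(\ell^2+\Phi^2)$ as you write. The maximum-principle computation for $G=(\rho^2-\ell^2)^2|II|^2$ is the right move and does reduce everything to a bound on the support function $\Phi$.

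But Step~3 is not a proof, and you say so yourself: you identify $\sup_{L_\rho(x_0)\cap M}\Phi^2\leq C(n)\rho^2$ as the crux, observe that the obvious differential identities for $\Phi$ couple back to $|II|^2$, propose to run a joint maximum-principle argument with an unspecified test function, and then defer to Cheng--Yau and Schoen for the actual execution. That is exactly the gap. The whole content of the theorem lies in closing this loop with a dimensional constant, and your proposal stops at the point where the real work begins. If you want a self-contained argument, you must either carry out the combined test-function computation explicitly (choosing the auxiliary function and verifying the absorption of cross terms), or follow the intrinsic route and then prove the comparison between geodesic balls and Lorentz balls --- which, as you note, again requires the tilt bound. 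Either way, what you have written is an outline that correctly locates the difficulty but does not resolve it.
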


If $M$ is an entire maximal hypersurface, then $\rho$ in (\ref{EstCurvature})
can be chosen to be arbitrarily large, so $|II|\equiv0$ and hence the
Bernstein Theorem follows. But the following corollary is what we need.

\begin{cor}
For any $0<\epsilon<1$, there exists a positive constant $C(\epsilon,n)$ such
that if $u$ solves the equation (\ref{Ediv}) in $\mathbb{R}^{n}\backslash
\bar{B}_{1}$ and satisfies $-(1-\epsilon)|x|\leq u(x)\leq(1-\epsilon)|x|$ in
$\mathbb{R}^{n}\backslash B_{1}$ then
\[
|II|(x)\leq\frac{C(\epsilon,n)}{|x|}\ \ \ \mbox{for}\ |x|\geq\frac{8}%
{\epsilon}.
\]

\end{cor}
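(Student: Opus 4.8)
The plan is to apply the curvature estimate of Theorem 5.1 with an appropriately large radius $\rho$ centered at a point $x_0 \in M$ with $|x_0|$ large, and to show that the hypotheses $L_{2\rho}(x_0) \cap M \subset\subset M$ can be guaranteed by choosing $\rho$ comparable to $|x_0|$ and using the linear growth bound $-(1-\epsilon)|x| \le u(x) \le (1-\epsilon)|x|$ to keep the Lorentz ball $L_{2\rho}(x_0)$ away from the hole $\bar B_1$ where $u$ is not defined. Concretely, fix $x_0 = (y, u(y))$ on the graph with $r := |y| \ge 8/\epsilon$. I would take $\rho = \theta r$ for a dimensional fraction $\theta = \theta(\epsilon) \in (0,1)$ to be chosen. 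The first step is to verify that every point $(x, u(x))$ of $M$ lying in the closed Lorentz ball $\bar L_{2\rho}(x_0)$ has $|x|$ bounded below by, say, $r/2 > 1$, so that this portion of $M$ stays in the open region $\mathbb{R}^n \setminus \bar B_1$ where $M$ is a genuine (complete-in-the-relevant-sense) maximal hypersurface; this uses that $\langle X - X_0, X - X_0\rangle < (2\rho)^2$ forces $|x - y|^2 - |u(x) - u(y)|^2 < 4\rho^2$, hence $|x - y|^2 < 4\rho^2 + |u(x)-u(y)|^2$, and then the growth bound controls $|u(x) - u(y)| \le |u(x)| + |u(y)| \le (1-\epsilon)(|x| + r)$, which after a short computation (using $\rho = \theta r$ with $\theta$ small relative to $\epsilon$) yields both $|x - y| \lesssim r$ and $|x| \ge r/2$.

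The second step is to insert these bounds into (5.1). At the center itself $l_{x_0}(x_0) = 0$, so evaluating the estimate at $x = x_0 \in L_\rho(x_0)$ gives
\[
|II|^2(x_0) \le \frac{c(n)\rho^2}{\rho^4} = \frac{c(n)}{\rho^2} = \frac{c(n)}{\theta^2 r^2},
\]
which is exactly a bound of the form $|II|(x_0) \le C(\epsilon, n)/r = C(\epsilon,n)/|y|$. Since $y$ was an arbitrary point with $|y| \ge 8/\epsilon$, this is the desired conclusion. The constant $8/\epsilon$ in the statement is the threshold below which the argument that $L_{2\rho}(x_0)\cap M$ avoids $\bar B_1$ breaks down; I would check that with the natural choice of $\theta$ (for instance $\theta$ a fixed small multiple of $\epsilon$) the inequality $r \ge 8/\epsilon$ suffices to make the separation estimate $|x| \ge r/2 > 1$ hold.

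The main obstacle is the geometric bookkeeping in Step 1: one must be careful that $L_{2\rho}(x_0) \cap M \subset\subset M$ in the precise sense required by Theorem 5.1, i.e. that this intersection is relatively compact in $M$ as a manifold, not merely that its spatial projection avoids $\bar B_1$. Since $M = \{(x,u(x)) : |x| > 1\}$ with $u$ smooth and strictly spacelike there, relative compactness of $\bar L_{2\rho}(x_0) \cap M$ follows once we know the projection of this set to $\mathbb{R}^n$ has closure contained in the open set $\{|x| > 1\}$ and is bounded — both of which come out of the Lorentz-ball inequality combined with the linear growth bound, as sketched above. The only real care needed is to make the constants honest: track how $\theta$ must shrink as $\epsilon \to 0$ and confirm the final constant depends only on $\epsilon$ and $n$, which it manifestly does since $c(n)$ is dimensional and $\theta = \theta(\epsilon)$.
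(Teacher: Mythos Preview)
Your approach is correct and essentially the same as the paper's, which phrases Step~1 slightly more directly by computing, for each $y\in\partial B_1$, the Lorentz distance $l_{x_0}(y)>\sqrt{\epsilon/2}\,|x_0|$ from the center to the boundary of $M$ and then simply taking $\rho=\sqrt{\epsilon/2}\,|x_0|$ in Theorem~5.1. One small caution: the lower bound $|x|\ge r/2$ you assert for points of $L_{2\rho}(x_0)\cap M$ is too optimistic (for small $\epsilon$ the intersection can reach down to $|x|$ of order $\epsilon r$, since the growth bound only gives $(r-|x|)^2-(1-\epsilon)^2(r+|x|)^2\le 4\theta^2 r^2$), but you only need $|x|>1$, and with $\theta$ of order $\sqrt{\epsilon}$ and $r\ge 8/\epsilon$ that does follow from exactly the inequality you wrote down.
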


\begin{proof}
Fix a point $x\in\mathbb{R}^{n}$ with $|x|\geq\frac{8}{\epsilon}$, for any
$y\in\partial B_{1}$
\[
|u(x)-u(y)|\leq|u(x)|+|u(y)|\leq(1-\epsilon)|x|+(1-\epsilon)
\]
and
\[
|x-y|\geq|x|-1.
\]
So
\[
l_{x}(y)=(|y-x|^{2}-|u(y)-u(x)|^{2})^{\frac{1}{2}}>\sqrt{\frac{\epsilon}{2}%
}|x|.
\]
This means that
\[
L_{\sqrt{\frac{\epsilon}{2}}|x|}(x)\subset\subset M.
\]
So by letting $x=x_{0}$ and $\rho=\sqrt{\frac{\epsilon}{2}}|x|$ in
(\ref{EstCurvature}) we get
\[
|II|(x)\leq\frac{\sqrt{c(n)}}{\sqrt{\frac{\epsilon}{2}}|x|}=\frac
{C(\epsilon,n)}{|x|}.
\]

\end{proof}

\begin{thm}
Let $u$ be an exterior solution in $\mathbb{R}^{n}\backslash A$. Then for any
open set $U\supset A$ there is $\theta>0$ such that $|Du|\leq1-\theta$ in
$\mathbb{R}^{n}\backslash U$. Moreover, $\lim_{x\rightarrow\infty}Du(x)=a$
where $a$ is given by Theorem 4.1.
\end{thm}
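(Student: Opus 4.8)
The plan is to combine the curvature bound of Corollary 5.1 with the growth control of Theorem 4.1 to first get the uniform gradient bound, and then to upgrade the blowdown convergence of Theorem 4.1 to genuine $C^1$ convergence at infinity.

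\textbf{Step 1: Uniform gradient bound.} Fix an open set $U \supset A$. After a rescaling we may assume (using Theorem 4.1) that $A \subset B_1$, that $-(1-\epsilon)|x| \le u(x) - c_0 \le (1-\epsilon)|x|$ on $\mathbb{R}^n \setminus B_1$ for some $\epsilon \in (0,1)$, and (shrinking if necessary) that $\{|x| \ge 8/\epsilon\} \subset \mathbb{R}^n \setminus U$; the remaining compact annular region $\overline{U'} \setminus U$ between $U$ and $\{|x| = 8/\epsilon\}$, where $u \in C^2$ and $|Du| < 1$, automatically carries a bound $|Du| \le 1 - \theta_1$ by compactness. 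For the outer region $|x| \ge 8/\epsilon$, Corollary 5.1 gives $|II|(x) \le C(\epsilon,n)/|x|$, and since $|D^2 u| \le |II|$ this yields $|D^2 u|(x) \le C(\epsilon,n)/|x|$. Now I would argue by contradiction: if there is a sequence $x_j$ with $|x_j| \to \infty$ and $|Du(x_j)| \to 1$, then along the segment from $x_j$ to, say, $2x_j$ (or any comparable point), the estimate $|D^2 u| \le C/|x|$ forces $|Du|$ to stay close to $1$ on a ball of radius $\sim c|x_j|$ around $x_j$; integrating $|Du| \approx 1$ along a radial-ish chord of that length produces a pair of points $y, z$ with $|u(y) - u(z)|$ nearly equal to $|y - z|$ and both of size comparable to $|x_j|$, contradicting the strict spacelike inequality $|u(y)-u(z)| < |y-z|$ from Theorem 3.1 once one checks the oscillation is incompatible with the linear two-sided bound $|u| \le (1-\epsilon)|x| + C$. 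Alternatively, and perhaps more cleanly, one rescales $v_j(x) = u(|x_j| x)/|x_j|$ on an annulus around $|x|=1$: the bound $|D^2 u| \le C/|x|$ gives $|D^2 v_j| \le C$, so $v_j \to V$ in $C^{1,\alpha}_{loc}$, $V$ is a maximal hypersurface on that annulus with $|Dv_j(x_j/|x_j|)| \to 1$ passing to $|DV| = 1$ somewhere, contradicting strict spacelikeness of the limit (the limit is strictly spacelike because $|V(x)-V(y)| \le (1-\epsilon)|x-y| \cdot(\text{something})$... more carefully: $V$ inherits $|V(x) - V(y)| < |x-y|$ from the exterior spacelike property, and an interior $C^1$ maximal hypersurface with $|DV| = 1$ at an interior point is impossible). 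I'd take whichever of these two routes is shortest; both rest on Theorem 3.1 and Corollary 5.1.

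\textbf{Step 2: Identifying the limiting gradient.} Once $|Du| \le 1 - \theta$ outside $U$, equation \eqref{Endiv} is uniformly elliptic there with ellipticity constants depending only on $\theta$. Combined with $|D^2 u|(x) \le C/|x|$ (and bootstrapping via Schauder: on the ball $B_{|x|/2}(x)$ the rescaled function $u(|x| \cdot)/|x|$ solves a uniformly elliptic equation with bounded coefficients, so all higher derivatives of $u$ at $x$ decay like $|D^k u|(x) \le C_k |x|^{1-k}$), we get that $Du$ has bounded oscillation at infinity and in fact $|D^2 u|(x) \to 0$. To pin down the limit, rescale: $v_r(x) = u(rx)/r \to a \cdot x$ locally uniformly on $\mathbb{R}^n \setminus \{0\}$ by Theorem 4.1, and the uniform $C^2$ (indeed $C^k$) bounds on $v_r$ on any fixed annulus $\{1/2 \le |x| \le 2\}$ (which translate from the decay estimates on $u$) upgrade this to $C^1_{loc}$ convergence, so $Dv_r(x) = Du(rx) \to a$ uniformly on that annulus. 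In particular $Du(rx) \to a$ for $|x| = 1$, i.e. $Du(y) \to a$ as $|y| \to \infty$.

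\textbf{Main obstacle.} The delicate point is Step 1: converting the second-derivative decay $|D^2 u| \le C/|x|$ into a contradiction with strict spacelikeness when $|Du|$ is assumed to approach $1$. The scale-invariant nature of the estimate is exactly right — on the unit scale after rescaling, one has a uniform $C^2$ bound and a maximal surface in the limit — but one must be careful that the blowup limit $V$ is defined on a genuine open set (an annulus, not a punctured ball), is strictly spacelike as an \emph{interior} maximal graph yet has a point with $|DV| = 1$, which is the contradiction; verifying that the exterior spacelike inequality of Theorem 3.1 passes to the limit as a \emph{strict} interior inequality (or at least rules out $|DV|=1$ at an interior point via, e.g., the strong maximum principle applied to the defining equation, or directly via Theorem 2.1/Theorem 2.3 on the limit) is the crux. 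Everything after that — uniform ellipticity, Schauder bootstrap, and the $C^1$ upgrade of the blowdown — is standard once the gradient is bounded away from $1$.
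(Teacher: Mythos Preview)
Your overall architecture matches the paper's proof: rescale, use Corollary 5.1 to get uniform $C^2$ bounds on the rescalings, pass to a $C^1$ limit, and find a contradiction; then repeat the compactness argument (or use Moser) to get $Du(\infty)=a$. Step 2 is fine.

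The issue is that what you flag as the ``main obstacle'' is not an obstacle at all, and your proposed ways around it are shaky. You try to argue that the blowdown limit $V$ inherits \emph{strict} spacelikeness and then that a $C^1$ area-maximizing graph cannot have $|DV|=1$ at an interior point. The first claim is not automatic (strict inequalities degrade to non-strict ones under uniform limits), and the second, while plausible, would need justification. But none of this is needed: Theorem 4.1 already tells you that the blowdown of $u$ is \emph{unique} and equal to $a\cdot x$ with $|a|\le 1-\epsilon$. So when you rescale $v_j(x)=u(R_j x)/R_j$ and pass to the limit, you know $V(x)=a\cdot x$ on the annulus, hence $|DV|\equiv|a|\le 1-\epsilon<1$. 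The $C^1$ convergence (from the uniform $|D^2 v_j|$ bound via Corollary 5.1) then forces $Dv_j\to a$ uniformly on the annulus, contradicting $|Dv_j(x_j/R_j)|=|Du(x_j)|\to 1$. This is exactly how the paper closes the argument: the limit is already identified, so there is nothing subtle to verify about it.

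In short: use the full strength of Theorem 4.1 (uniqueness and identity of the blowdown) in Step 1, not just the growth bound. Once you do, your ``crux'' evaporates and the proof is complete.
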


\begin{proof}
Assume $A\subset B_{1}$ and $-(1-\epsilon)|x|\leq u(x)\leq(1-\epsilon)|x|$ in
$\mathbb{R}^{n}\backslash B_{1}$. Denote $\hat{R}:=\frac{10}{\epsilon}$. Since
$|Du(x)|<1$ for $x\in\mathbb{R}^{n}\backslash U$, if $|Du|\leq1-\theta$ is not
true then there is a sequence of points $\{x_{j}\}$ such that $|Du(x_{j}%
)|>1-\frac{1}{j}$ and $|x_{j}|\rightarrow+\infty$. Define $R_{j}:=\hat{R}%
^{-1}|x_{j}|$ (assume $R_{j}>1$) and $v_{j}(x):=\frac{u(R_{j}x)}{R_{j}}$. Then
by Theorem 4.1, we have $v_{j}(x)\rightarrow V(x)=a\cdot x$.

On the other hand, by Corollary 5.1, the curvature $|II|$ is uniformly bounded
for all $v_{j}(x)$ on the compact set $\bar{B}_{\hat{R}+1}\backslash
B_{\hat{R}-1}$, so is $|D^{2}v_{j}(x)|$. This means $Dv_{j}(x)\rightarrow
DV(x)=a$ in $\bar{B}_{\hat{R}+1}\backslash B_{\hat{R}-1}$. Denote
$\lim_{k\rightarrow\infty}\frac{\hat{R}x_{j_{k}}}{|x_{j_{k}}|}=\hat{x}%
\in\partial B_{\hat{R}}$ for some subsequence $j_{k}$. Then $DV(\hat{x}%
)=\lim_{k\rightarrow\infty}Dv_{j_{k}}(\frac{\hat{R}x_{j_{k}}}{|x_{j_{k}}|})$.
But $|Dv_{j_{k}}(\frac{\hat{R}x_{j_{k}}}{|x_{j_{k}}|})|=|Du(x_{j})|>1-\frac
{1}{j_{k}}$ and it implies $|DV(\hat{x})|=1$. This is a contradiction.

The conclusion $\lim_{x\rightarrow\infty}Du(x)=a$ can be proved in the same
compactness way as above.

There is another Harnack way to show the existence of $Du\left(
\infty\right)  ,$ once $\left\vert Du\right\vert $ is uniformly bounded away
from one, $\left\vert Du\right\vert \leq1-\theta.$ Indeed, each bounded
component $u_{k}$ of $Du$ satisfies a uniformly elliptic equation
\[
\partial_{x_{i}}\left[  F_{p_{i}p_{j}}\left(  Du\right)  \partial_{x_{j}}%
u_{k}\right]  =0\ \ \text{in }\mathbb{R}^{n}\backslash A
\]
with $F\left(  p\right)  =\sqrt{1-\left\vert p\right\vert ^{2}}.$ By Moser's
Harnack, in fact [Mo61, Theorem 5], $\lim_{x\rightarrow\infty}u_{k}(x)$ exists.
\end{proof}

Because we will use Moser's results again in next section, we state them here
in the needed form for convenience.

\begin{thm}
[{Harnack inequality [Mo61, Theorem 1]}]Let $w$ be a nonnegative solution of
\begin{equation}
(a_{ij}(x)w_{j})_{i}=0 \label{Euniform}%
\end{equation}
in $\mathbb{R}^{n}\backslash B_{R_{0}},$ where $\Lambda^{-1}I\leq
(a_{ij}(x))\leq\Lambda I$ for for a constant $\Lambda\in\lbrack1,\infty).$
Then for any $R\geq10R_{0}$
\begin{equation}
\sup_{\partial B_{R}}w\leq\Gamma\inf_{\partial B_{R}}w \label{Harnack}%
\end{equation}
for $\Gamma=\Gamma\left(  n,\Lambda\right)  .$
\end{thm}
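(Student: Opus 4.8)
The plan is to reduce the statement to the interior Harnack inequality on annuli via a standard covering-and-chaining argument. First I would recall that Moser's classical interior Harnack inequality (the version in [Mo61, Theorem 1] proved for balls) gives: if $w\geq 0$ solves \eqref{Euniform} in a ball $B_{4r}(y)$, then $\sup_{B_{r}(y)}w\leq C(n,\Lambda)\inf_{B_{r}(y)}w$. The content of Theorem 5.2 is to propagate this local comparison around the whole sphere $\partial B_R$ and to control the constant uniformly in $R$; the key scaling observation is that equation \eqref{Euniform} is invariant under the dilation $x\mapsto R^{-1}x$ (the rescaled coefficients still satisfy the same ellipticity bounds $\Lambda^{-1}I\le(\tilde a_{ij})\le\Lambda I$), so it suffices to prove the estimate on a fixed annulus, say $\partial B_1$, for solutions defined on $B_{10}\setminus B_{1/10}$, with a constant $\Gamma=\Gamma(n,\Lambda)$ independent of the solution.

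The key steps, in order, are: (i) rescale $w_R(x):=w(Rx)$, which solves an equation of the same form in the annulus $\{1/20<|x|<20\}$ once $R\ge 10R_0$, so the hypotheses are met on a fixed-size region; (ii) cover the unit sphere $\partial B_1$ by finitely many balls $B_{1/8}(y_1),\dots,B_{1/8}(y_N)$ with $y_i\in\partial B_1$, where $N=N(n)$ depends only on the dimension, chosen so that consecutive balls (in a cyclic ordering) overlap and each $B_{1/2}(y_i)\subset\{1/20<|x|<20\}$; (iii) apply the interior Harnack inequality on each $B_{1/8}(y_i)$ (with the larger concentric ball $B_{1/2}(y_i)$ inside the domain) to get $\sup_{B_{1/8}(y_i)}w_R\le C(n,\Lambda)\inf_{B_{1/8}(y_i)}w_R$; (iv) chain these inequalities around the sphere: if two such balls overlap, a common point gives $\inf_{B_{1/8}(y_i)}w_R\le \sup_{B_{1/8}(y_i)}w_R$ bounded below and above by the values on the neighbor, so after traversing all $N$ balls one obtains $\sup_{\partial B_1}w_R\le C(n,\Lambda)^{N}\inf_{\partial B_1}w_R$; (v) set $\Gamma(n,\Lambda):=C(n,\Lambda)^{N(n)}$ and undo the rescaling to conclude $\sup_{\partial B_R}w\le\Gamma\inf_{\partial B_R}w$ for all $R\ge 10R_0$.

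I do not expect a genuine obstacle here: everything reduces to Moser's interior estimate plus elementary covering geometry, and the condition $R\ge 10R_0$ is exactly what guarantees the fixed-size annulus $\{1/20<|x|<20\}$ lands inside the punctured domain $\mathbb{R}^n\setminus B_{R_0}$ after rescaling (one checks $R/20\ge 10R_0/20=R_0/2$; adjusting the numerical constant $10$ to something slightly larger if one wants strict containment costs nothing). The only point requiring a word of care is that the number of balls $N$ and their radii can be fixed once and for all depending only on $n$, so that the accumulated constant $\Gamma=C^N$ genuinely depends only on $n$ and $\Lambda$ and not on $R$; this is immediate since the rescaled problem always lives on the same annulus. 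In fact, since this is quoted verbatim as [Mo61, Theorem 1] in the form we need, I would simply cite it and remark that the exterior version follows from the interior one by the rescaling-and-chaining argument just sketched.
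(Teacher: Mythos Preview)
Your proposal is correct, and in fact the paper does not supply a proof of this statement at all: it is quoted verbatim as a known result from Moser [Mo61, Theorem~1], restated ``in the needed form for convenience'' for use in Section~6. Your rescaling-and-chaining argument is exactly the standard way to pass from Moser's interior Harnack inequality on balls to a uniform Harnack inequality on spheres $\partial B_R$ in an exterior domain, and it correctly identifies why the constant $\Gamma$ depends only on $(n,\Lambda)$ and not on $R$ (the rescaled problem lives on a fixed annulus, the covering number $N=N(n)$ is dimensional, and the ellipticity bounds are scale-invariant). The only cosmetic point is the numerics: with $R\ge 10R_0$ the rescaled solution $w_R$ is defined on $\{|x|>1/10\}$, not $\{|x|>1/20\}$, so one should take the chain balls $B_{1/2}(y_i)\subset\{|x|>1/2\}\subset\{|x|>1/10\}$---which you effectively do---rather than the annulus $\{1/20<|x|<20\}$; as you note, this is immaterial.
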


\begin{thm}
[{Behavior at $\infty$ [Mo61, Theorem 5]}]Let $w$ be a bounded solution to the
uniformly elliptic equation (\ref{Euniform}) in $\mathbb{R}^{n}\backslash
B_{1}$. Then $\lim_{|x|\rightarrow\infty}w(x)$ exists.
\end{thm}

\section{Asymptotic behavior: proof of Theorem 1.1}

Now we are ready to prove Theorem 1.1. We present the proof in the following
four subsections. We first treat the special case $Du\left(  \infty\right)
=a=0$. The general case can be transformed to this special case by a suitable
hyperbolic rotation (Lorentz transformation).

\subsection{Case $a=0$\textbf{, }$n=2$}

\ 

\vspace{0.618ex} \noindent\textit{Step 1.} ($|u(x)|\leq c+d\ln|x|$ for large
$c$ and $d$.)

We still assume $R=1$ in Theorem 4.1. By Theorem 4.1 and Theorem 5.2, we known
that $\lim_{r\rightarrow\infty}\frac{u(rx)}{r}=0$ and $\lim_{x\rightarrow
\infty}|Du(x)|=0$. Moreover, we have either $u(x)\geq c$ for some
$c\in\mathbb{R}$ in $\mathbb{R}^{n}\backslash B_{1}$ and $u(y)=c$ at some
point $y\in\partial B_{1}$ or $u(x)\leq c$ for some $c\in\mathbb{R}$ in
$\mathbb{R}^{n}\backslash B_{1}$ and $u(y)=c$ at some point $y\in\partial
B_{1}$. We assume the former case happens and $c=0$, $y=e_{1}$. That is
$u(x)\geq0$ in $\mathbb{R}^{n}\backslash B_{1}$ and $u(e_{1})=0$. Recall the
radial barrier $w_{\lambda}$ in (\ref{SolRadial}). Set $\phi_{\lambda
}(x):=w_{\lambda}(x)-w_{\lambda}(e_{1})$ and $\psi_{\lambda}(x):=\phi
_{\lambda}(x)+\max_{\partial B_{1}}u$. As the first step of the proof, we want
to show that $u(x)\leq\psi_{\lambda}(x)$ in $\mathbb{R}^{n}\backslash B_{1}$
for sufficiently large $\lambda$.

We observe that as long as $\lambda$ is large enough, $\phi_{\lambda}(2e_{1})$
can be arbitrarily close to 1. Since $u(2e_{1})<1$, we can choose $\lambda
_{0}$ such that $\phi_{\lambda_{0}}(2e_{1})>u(2e_{1})$. Now we claim that
$u(x)\leq\psi_{\lambda_{1}}(x)$ in $\mathbb{R}^{n}\backslash B_{1}$, where
$\lambda_{1}:=(\Gamma+1)\lambda_{0}$ and the constant $\Gamma$ is from Theorem
5.3 for $u$. It is easy to see that $\psi_{\lambda_{1}}(x)>\Gamma\phi
_{\lambda}(x)$ in $\mathbb{R}^{n}\backslash B_{R}$ for some $R=R\left(
\lambda_{0},\Gamma\right)  $ large enough. If $u(x)\leq\psi_{\lambda_{1}}(x)$
in $\mathbb{R}^{n}\backslash B_{R}$ then $u(x)\leq\psi_{\lambda_{1}}(x)$ in
$\mathbb{R}^{n}\backslash B_{1}$ by comparison principle since $u\leq
\max_{\partial B_{1}}u=\psi_{\lambda_{1}}(x)$ on $\partial B_{1}$. Suppose
$u(z)>\psi_{\lambda_{1}}(z)$ at some point $z\in\mathbb{R}^{n}\backslash
B_{R}$, then $u>\phi_{\lambda_{0}}$ on $\partial B_{|z|}$ by Theorem 5.3.
Since $u\geq0=\phi_{\lambda_{0}}$ on $\partial B_{1}$, we have $u\geq
\phi_{\lambda_{0}}$ in $B_{|z|}\backslash B_{1}$ by comparison principle,
especially $u(2e_{1})\geq\phi_{\lambda_{0}}(2e_{1})$. This is a contradiction.
So we proved that $u(x)\leq\psi_{\lambda_{1}}(x)$ in $\mathbb{R}^{n}\backslash
B_{1}$.

\vspace{0.618ex} \noindent\textit{Step 2.} ($u(x)= c+d\ln|x|+o(1)$ for some
$c$ and $d$.)

Denote
\[
\lambda^{\ast}:=\inf\{\lambda\geq0:u\leq\psi_{\lambda}\ \mbox{in}\ \mathbb{R}%
^{n}\backslash B_{1}\}.
\]
By continuity, $u\leq\psi_{\lambda^{\ast}}$ in $\mathbb{R}^{n}\backslash
B_{1}$. If $\lambda^{\ast}=0$, then $0\leq u\leq\max_{\partial B_{1}}u$ in
$\mathbb{R}^{n}\backslash B_{1}$. By Theorem 5.4, $u$ has a limit at infinity.
Now we assume $\lambda^{\ast}>0$ and our aim is to show that also $u\geq
\phi_{\lambda^{\ast}}$ in $\mathbb{R}^{n}\backslash B_{1}$.

For all positive integers $k>\max\{10,\frac{2}{\lambda^{\ast}}\}$, there exist
$y^{k}$ such that $|y^{k}|\geq e^{k^{2}}$, $|y^{k+1}|>|y^{k}|$ and
$u(y^{k})>\psi_{\lambda^{\ast}-\frac{1}{k}}(y^{k})$. By
(\ref{SolRadialAsymp2d}), there exists $\hat{k}$ such that for all $k\geq
\hat{k}$, we have $\psi_{\lambda^{\ast}}(y^{k})-u(y^{k})<\psi_{\lambda^{\ast}%
}(y^{k})-\psi_{\lambda^{\ast}-\frac{1}{k}}(y^{k})<\frac{2}{k}\ln|y^{k}|$. The
function $w(x):=\psi_{\lambda^{\ast}}(x)-u(x)$ satisfies equation
(\ref{Euniform}) with
\begin{equation}
a_{ij}(x)=\int_{0}^{1}\frac{\delta_{ij}}{\sqrt{1-|Dw^{t}|^{2}}}+\frac
{w_{i}^{t}w_{j}^{t}}{(\sqrt{1-|Dw^{t}|^{2}})^{3}}dt \label{Coefficient}%
\end{equation}
where $w^{t}:=(1-t)u+t\psi_{\lambda^{\ast}}$. By Theorem 5.3, we have
$\psi_{\lambda^{\ast}}(x)-u(x)<\frac{2\cdot\Gamma}{k}\ln|x|$ on $\partial
B_{|y^{k}|}$ for all $k\geq\hat{k}$. Fix any small $\delta>0$. Note that
$\psi_{\lambda^{\ast}}(x)-\phi_{\lambda^{\ast}-\delta}(x)>\frac{\delta}{2}%
\ln|x|$ outside some ball. So there exist $\tilde{k}$ such that $u(x)>\phi
_{\lambda^{\ast}-\delta}(x)$ on $\partial B_{|y^{k}|}$ for all $k\geq\tilde
{k}$. Thus $u\geq\phi_{\lambda^{\ast}-\delta}$ in $\mathbb{R}^{n}\backslash
B_{1}$ by comparison principle. By continuity, we have $u\geq\phi
_{\lambda^{\ast}}$ in $\mathbb{R}^{n}\backslash B_{1}$.

Now we have established that $\phi_{\lambda^{\ast}}\leq u\leq\psi
_{\lambda^{\ast}}(x)$ in $\mathbb{R}^{n}\backslash B_{1}$. That is $0\leq
\psi_{\lambda^{\ast}}(x)-u\leq\max_{\partial B_{1}}u$. So by Theorem 5.4,
$\psi_{\lambda^{\ast}}(x)-u$ has a limit at infinity. Denote this
$\lambda^{\ast}=d$, then we have
\[
u(x)=c+d\ln|x|+o(1)
\]
as $|x|\rightarrow\infty$ for some constant $c$. Since we assumed $u$ is
bounded below, the constant $d\geq0$. If $u$ is bounded above, then we have
$u(x)=c+d\ln|x|$+o(1) with $d\leq0$.

\vspace{0.618ex} \noindent\textit{Step 3.} (Improve $o(1)$ to $O(|x|^{-1})$.)

We still assume $u\geq0$ as above. Suppose $d>0$. Choose $R_{0}>10$ such that
$|Du(x)|<\frac{1}{10}$ and $u(x)<2d\ln|x|$ when $|x|\geq R_{0}$. For any point
$x$ with $|x|:=2R\geq2R_{0}$, define $v(y):=\frac{u(Ry+x)}{R}$. Since $u$
satisfies the non-divergence form equation (\ref{Endiv}), $v(y)$ satisfies the
equation $a_{ij}(y)v_{ij}(y)=0$ for $y\in B_{1}$ with $a_{ij}(y)=\delta
_{ij}+\frac{v_{i}v_{j}}{1-|Dv|^{2}}$. By Morrey-Nirenberg's $C^{1,\alpha}$
estimate for 2 dimensional uniformly elliptic non-divergence form equation
[GT98,Theorem 12.4], for some $\alpha>0$ we have
\begin{equation}
\Vert v\Vert_{C^{1,\alpha}(B_{\frac{1}{2}})}\leq C\Vert v\Vert_{L^{\infty
}(B_{1})}\leq\frac{C\ln|x|}{|x|} \label{EstLnv}%
\end{equation}
where $C$ is a universal constant. In particular, it means that
\begin{equation}
|Du(x)|=|Dv(0)|\leq\frac{C\ln|x|}{|x|},\ \mbox{for}\ |x|\geq2R_{0}.
\label{EstLnu}%
\end{equation}

Let $e$ be any unit vector, then $v_{e}$ satisfies the equation $(a_{ij}%
(y)(v_{e})_{j})_{i}=0$ in $B_{1}$, with $a_{ij}=\frac{\delta_{ij}}%
{\sqrt{1-|Dv|^{2}}}+\frac{v_{i}v_{j}}{(\sqrt{1-|Dv|^{2}})^{3}}$. By
(\ref{EstLnv}), $\Vert a_{ij}\Vert_{C^{\alpha}(B_{\frac{1}{2}})}$ is bounded
by a universal constant. By Schauder estimate [GT98, Theorem 8.32],
\[
|Dv_{e}(0)|\leq C\Vert v_{e}\Vert_{L^{\infty}(B_{\frac{1}{2}})}\leq\frac
{C\ln|x|}{|x|}.
\]
Note that $Ru_{ee}(x)=v_{ee}(0)$, so we have
\begin{equation}
|D^{2}u(x)|\leq\frac{C\ln|x|}{|x|^{2}},\ \mbox{for}\ |x|\geq2R_{0}.
\label{EstLn2}%
\end{equation}
In fact, using bootstrap argument, we have
\begin{equation}
|D^{k}u(x)|\leq\frac{C\ln|x|}{|x|^{k}},\ \mbox{for}\ |x|\geq2R_{0},
\label{EstLnk}%
\end{equation}
for all $k=1,2,\cdots$.

We write equation (\ref{Endiv}) as
\[
\triangle u=\frac{-(Du)^{^{\prime}}D^{2}uDu}{1-|Du|^{2}}%
:=f(x),\ \mbox{in}\ \mathbb{R}^{n}\backslash B_{2R_{0}}.
\]
Then $|f(x)|\leq\frac{C(\ln|x|)^{3}}{|x|^{4}}$ by (\ref{EstLnu}) and
(\ref{EstLn2}). Define $K[u](x):=u(\frac{x}{|x|^{2}})$ for $x\in B_{\frac
{1}{2R_{0}}}\backslash\{0\}$. Then
\[
\triangle K[u]=|x|^{-4}f(\frac{x}{|x|^{2}}):=g(x),\ \mbox{in}\ B_{\frac
{1}{2R_{0}}}\backslash\{0\}
\]
with $|g(x)|\leq C(-\ln|x|)^{3}$. Let $N[g]$ be the Newtonian potential of $g$
in $B_{\frac{1}{2R_{0}}}$. Since $g$ is in $L^{p}(B_{\frac{1}{2R_{0}}})$ for
any $p>0$, $N[g]$ is in $W^{2,p}$ for any $p$ and hence is in $C^{1,\alpha}$
for any $0<\alpha<1$. Now $K[u]-N[g]$ is harmonic in $B_{\frac{1}{2R_{0}}%
}\backslash\{0\}$. Notice that $|K[u](x)|\leq-2d\ln|x|+C$ in $B_{\frac
{1}{2R_{0}}}\backslash\{0\}$, so $|K[u]-N[g]|\leq-2d\ln|x|+C$ in $B_{\frac
{1}{2R_{0}}}\backslash\{0\}$. Therefore $K[u]-N[g]$ is the sum of $c_{1}%
\ln|x|$ (for some constant $c_{1}$) and a harmonic function in $B_{\frac
{1}{2R_{0}}}$. So $K[u](x)$ is the sum of $c_{1}\ln|x|$ and a $C^{1,\alpha}$
function in $B_{\frac{1}{2R_{0}}}$. Fix an $\alpha\in(0,1)$, for some affine
function $c_{2}+b\cdot x$, we have $|K[u](x)-(c_{1}\ln|x|+c_{2}+b\cdot x)|\leq
C|x|^{1+\alpha}$ in $B_{\frac{1}{2R_{0}}}\backslash\{0\}$. Go back to $u$ and
we have $|u(x)-(-c_{1}\ln|x|+c_{2}+b\cdot\frac{x}{|x|^{2}})|\leq
C|x|^{-1-\alpha}$ for $|x|\geq2R_{0}$. From the result of Step 2, we must have
$-c_{1}=d$ and $c_{2}=c$. Thus
\[
u(x)=c+d\ln|x|+O(|x|^{-1}).
\]

\vspace{0.618ex} \noindent\textit{Step 4.} (Improve $O(|x|^{-1})$ to
$O_{k}(|x|^{-1})$.)

Since $\psi_{d}(x)=\tilde{c}+d\ln|x|+O_{k}(|x|^{-1})$ for some $\tilde{c}$, we
consider $w(x):=\psi_{d}(x)-u(x)-\tilde{c}+c=O(|x|^{-1})$. The function $w$
satisfies the equation $(a_{ij}w_{j})_{i}=0$ with $a_{ij}$ given by
(\ref{Coefficient}). In view of (\ref{EstLnk}) and $|D^{k}\psi_{d}%
(x)|\leq\frac{C}{|x|^{k}}$, we have $|D^{k}w^{t}(x)|\leq\frac{C\ln|x|}%
{|x|^{k}}$ and hence $|D^{k}a_{ij}(x)|\leq\frac{C(k)(\ln|x|)^{k}}{|x|^{k+2}}$.
For any point $x$ with $|x|:=4R\geq4R_{0}$, define $v(y):=\frac{w(Ry+x)}{R}$.
The $v$ satisfies the equation $(\tilde{a}_{ij}v_{j})_{i}=0$ with $\tilde
{a}_{ij}(y)=a_{ij}(x+Ry)$. We have $\Vert D^{k}\tilde{a}_{ij}\Vert
_{C^{0}(B_{1})}=R^{k}\Vert D^{k}a_{ij}\Vert_{C^{0}(B_{R}(x))}\leq C(k)$ and so
$\Vert\tilde{a}_{ij}\Vert_{C^{k}(B_{1})}\leq C(k)$ for all $k$. Then by
Schauder estimate,
\[
R^{k-1}|D^{k}w(x)|=|D^{k}v(0)|\leq C(k)\Vert v\Vert_{L^{\infty}(B_{1})}%
\leq\frac{C(k)}{|x|^{2}},
\]
and hence $|D^{k}w(x)|\leq\frac{C(k)}{|x|^{k+1}}$ for $|x|\geq4R_{0}$. This
means $w(x)=O_{k}(|x|^{-1})$ and hence
\[
u(x)=c+d\ln|x|+O_{k}(|x|^{-1}).
\]

\vspace{0.618ex} \noindent\textit{Step 5.} (Ascertain the value of $d$.) \ %

\begin{align*}
Res[u]  &  =\frac{1}{2\pi}\int_{\partial B_{r}}\frac{\partial u/\partial
\vec{n}}{\sqrt{1-|Du|^{2}}}ds\\
&  =\frac{1}{2\pi}\int_{0}^{2\pi}(\frac{d}{r}+O(r^{-2}))r d\theta=d+O(r^{-1}).
\end{align*}
Letting $r\rightarrow\infty$, we have $d=Res[u]$.

\subsection{Case $a=0$, $n\geq3$}

\ 

\vspace{0.618ex} \noindent\textit{Step 1.} ($|u(x)|\leq c$ for large $c$.)

We still assume $u\geq0$ and define $\phi_{\lambda}$ and $\psi_{\lambda}$ as
above. Using the same method, we can prove $u(x)\leq\psi_{\lambda}(x)$ for
some large $\lambda$ in $\mathbb{R}^{n}\backslash B_{1}$. But in the
dimensions $n\geq3$, $\psi_{\lambda}$ is bounded.

\vspace{0.618ex} \noindent\textit{Step 2.} ($u(x)=u_{\infty}+O(|x|^{2-n})$.)

Since $u$ is bounded, applying Theorem 5.4 directly to $u$, we have
$u(x)=u_{\infty}+o(1)$ where $u_{\infty}:=\lim_{x\rightarrow\infty}u(x)$.
Define $\phi_{\lambda}(x):=w_{\lambda}(x)-w_{\lambda}(e_{1})+\min_{\partial
B_{1}}u$ and $\psi_{\lambda}(x):=w_{\lambda}(x)-w_{\lambda_{1}}(e_{1}%
)+\max_{\partial B_{1}}u$ for $\lambda\in(-\infty,+\infty)$. We can choose
$\lambda_{1}$ and $\lambda_{2}$ such that
\[
\lim_{x\rightarrow\infty}\phi_{\lambda_{1}}(x)=\lim_{x\rightarrow\infty}%
\psi_{\lambda_{2}}(x)=u_{\infty}.
\]
By comparison principle,
\[
\phi_{\lambda_{1}}(x)\leq u(x)\leq\psi_{\lambda_{2}}(x),
\]
and this means that
\[
u(x)=u_{\infty}+O(|x|^{2-n}).
\]

\vspace{0.618ex} \noindent\textit{Step 3.} ($u(x)=u_{\infty}-d|x|^{2-n}%
+O(|x|^{1-n})$ for some $d$.)

We adopt the same strategy as in the step 3 of above subsection: establish the
decay rate of $|Du(x)|$ and $|D^{2}u(x)|$, make Kelvin transform to
$u(x)-u_{\infty}$ and estimate the Newtonian potential of right hand side. The
only difference is that: when we estimate the decay rate of $|Du(x)|$ we can
not use Morrey's $C^{1,\alpha}$ estimate which is only true for 2 dimension,
alternatively the first order derivatives of $u$ (so is $v(y):=\frac
{u(x+Ry)-u_{\infty}}{R}$) satisfy a uniformly elliptic divergence form
equation and thus we can apply De Giorgi-Nash's Theorem [GT98, Chapter 8] to
$Dv.$
\begin{align*}
\Vert Dv\Vert_{C^{\alpha}(B_{\frac{1}{2}})}  &  \leq C\Vert Dv\Vert
_{L^{2}(B_{\frac{3}{4}})}\\
&  \leq C\Vert v\Vert_{L^{2}(B_{1})}\ \ \mbox{(Caccioppoli)}\\
&  \leq C\Vert v\Vert_{L^{\infty}(B_{1})}\leq C|x|^{1-n}.
\end{align*}
This treatment also fits two dimensional case certainly. We leave the
remaining details to the readers.

\vspace{0.618ex} \noindent\textit{Step 4.} (Improve $O(|x|^{1-n})$ to
$O_{k}(|x|^{1-n})$.)

Do the same thing to $u(x)-u_{\infty}$ as in the step 4 of above subsection.

\vspace{0.618ex} \noindent\textit{Step 5.} (Ascertain the value of $d$.) \ %

\begin{align*}
Res[u]  &  =\frac{1}{(n-2)|\partial B_{1}|}\int_{\partial B_{r}}\frac{\partial
u/\partial\vec{n}}{\sqrt{1-|Du|^{2}}}d\sigma\\
&  =\frac{1}{(n-2)|\partial B_{1}|}\int_{\partial B_{1}}(\frac{(n-2)d}%
{r^{n-1}}+O(r^{-n}))r^{n-1} dS^{n-1}=d+O(r^{-1}).
\end{align*}
Letting $r\rightarrow\infty$, we have $d=Res[u]$.

\subsection{Case $|a|>0$\textbf{, }$n=2$}

\ \vspace{0.618ex}

By a rotation, we can assume $a=(0,\eta)$ with $\eta\in(0,1)$. Make the
Lorentz transformation $L_{-\eta}:\mathbb{L}^{2+1}\rightarrow\mathbb{L}^{2+1}%
$,
\[
L_{-\eta}:(x_{1},x_{2},t)\rightarrow(x_{1},\frac{x_{2}-\eta t}{\sqrt
{1-\eta^{2}}},\frac{-\eta x_{2}+t}{\sqrt{1-\eta^{2}}}):=(\tilde{x}_{1}%
,\tilde{x}_{2},\tilde{t}).
\]
Then the plane $\{t=\eta x_{2}\}$ was transformed to the plane $\{\tilde
{t}=0\}$ and the graph of $u$ over $\mathbb{R}^{2}\backslash A$ was
transformed to another maximal hypersurface which is the graph of some
function (say $\tilde{u}$) defined on $\mathbb{R}^{2}\backslash\tilde{A}$ for
some bounded closed set $\tilde{A}$. The blowdown of $\tilde{u}$ is the $0$
function. So $\tilde{u}$ has the asymptotic expansion:
\begin{equation}
\tilde{u}(\tilde{x})=\tilde{c}+\tilde{d}\ln|\tilde{x}|+O(|\tilde{x}|^{-1}).
\label{EstLnnew}%
\end{equation}
Transform back and make some direct computations, we can establish the
asymptotic expansion of $u$. The details are as follows.

The Lorentz transformation
\begin{equation}
L_{\eta}:(\tilde{x}_{1},\tilde{x}_{2},\tilde{u}(\tilde{x}_{1},\tilde{x}%
_{2}))\rightarrow(\tilde{x}_{1},\frac{\tilde{x}_{2}+\eta\tilde{u}}%
{\sqrt{1-\eta^{2}}},\frac{\eta\tilde{x}_{2}+\tilde{u}}{\sqrt{1-\eta^{2}}%
})=(x_{1},x_{2},u(x_{1},x_{2})). \label{Lorentz}%
\end{equation}
Use the polar coordinates $x_{1}=r\cos\theta$, $x_{2}=r\sin\theta$ and
substitute (\ref{EstLnnew}) to (\ref{Lorentz}), we get
\begin{equation}
\tilde{x}_{2}+\eta\tilde{c}+\frac{\eta\tilde{d}}{2}\ln(r^{2}\cos^{2}%
\theta+\tilde{x}_{2}^{2})+O(\frac{1}{\sqrt{r^{2}\cos^{2}\theta+\tilde{x}%
_{2}^{2}}})=r\sin\theta\sqrt{1-\eta^{2}}. \label{LorentzPolar}%
\end{equation}
We want to solve $\tilde{x}$ from (\ref{LorentzPolar}) and substitute it to
(\ref{EstLnnew}) and the third equality of (\ref{Lorentz}), then we will get
the expansion of $u$. We need to solve $\tilde{x}$ three times iteratively.

Firstly, we assume $\sin\theta\neq0$. From (\ref{LorentzPolar}) we can see
\[
\tilde{x}_{2}=r\sin\theta\sqrt{1-\eta^{2}}(1+O(\frac{\ln r}{r}%
))\ \ \mbox{as}\ r\rightarrow+\infty.
\]
Then
\[
r^{2}\cos^{2}\theta+\tilde{x}_{2}^{2}=r^{2}(1-\eta^{2}\sin^{2}\theta
)(1+O(\frac{\ln r}{r})),
\]
and hence
\begin{equation}
\ln(r^{2}\cos^{2}\theta+\tilde{x}_{2}^{2})=2\ln(r\sqrt{1-\eta^{2}\sin
^{2}\theta})+O(\frac{\ln r}{r}) \label{Iteration2}%
\end{equation}
where $O(\ln r/r)$ is independent of small $\sin\theta$. Substitute
(\ref{Iteration2}) to (\ref{LorentzPolar}) and solve $\tilde{x}_{2}$ again,
\begin{equation}
\tilde{x}_{2}=r\sin\theta\sqrt{1-\eta^{2}}-\eta\tilde{c}-\eta\tilde{d}%
\ln(r\sqrt{1-\eta^{2}\sin^{2}\theta})+O(\frac{\ln r}{r}). \label{Iteration3}%
\end{equation}
Now we have
\[
r^{2}\cos^{2}\theta+\tilde{x}_{2}^{2}=r^{2}(1-\eta^{2}\sin^{2}\theta
)(1-\frac{2\eta\sqrt{1-\eta^{2}}\tilde{d}\sin\theta\ln r}{(1-\eta^{2}\sin
^{2}\theta)r}+O(\frac{1}{r})),
\]
and
\begin{equation}
\ln(r^{2}\cos^{2}\theta+\tilde{x}_{2}^{2})=2\ln(r\sqrt{1-\eta^{2}\sin
^{2}\theta})-\frac{\eta\sqrt{1-\eta^{2}}\tilde{d}\sin\theta}{(1-\eta^{2}%
\sin^{2}\theta)}\cdot\frac{\ln r}{r}+O(\frac{1}{r}). \label{Iteration4}%
\end{equation}
Substitute (\ref{Iteration4}) to (\ref{LorentzPolar}) and solve $\tilde{x}%
_{2}$ again,
\begin{align}
\tilde{x}_{2}  &  =r\sin\theta\sqrt{1-\eta^{2}}-\eta\tilde{c}-\eta\tilde{d}%
\ln(r\sqrt{1-\eta^{2}\sin^{2}\theta})\nonumber\\
&  +\frac{\eta\sqrt{1-\eta^{2}}\tilde{d}\sin\theta}{(1-\eta^{2}\sin^{2}%
\theta)}\cdot\frac{\ln r}{r}+O(\frac{1}{r}). \label{Iteration5}%
\end{align}
Substitute (\ref{Iteration4}) to (\ref{EstLnnew}) and then substitute
(\ref{EstLnnew}) and (\ref{Iteration5}) to the third equality of
(\ref{Lorentz}), we have
\begin{align}
u(r,\theta)  &  =\eta r\sin\theta+\sqrt{1-\eta^{2}}\tilde{c}+\sqrt{1-\eta^{2}%
}\tilde{d}\ln(r\sqrt{1-\eta^{2}\sin^{2}\theta})\nonumber\\
&  +\frac{\eta^{2}\tilde{d}\sin\theta}{(1-\eta^{2}\sin^{2}\theta)}\cdot
\frac{\ln r}{r}+O(\frac{1}{r}). \label{Asymptote2dOld}%
\end{align}

Notice that we get (\ref{Asymptote2dOld}) with the assumption $\sin\theta
\neq0$. If $\sin\theta=0$, then (\ref{LorentzPolar}) becomes
\begin{align*}
\tilde{x}_{2}+\eta\tilde{c} +\frac{\eta\tilde{d}}{2}\ln(r^{2}+\tilde{x}%
_{2}^{2})+O(\frac{1}{\sqrt{r^{2}+\tilde{x}_{2}^{2}}})=0.
\end{align*}
Then we have
\begin{align*}
\tilde{x}_{2}=-\eta\tilde{d}\ln r(1+o(1)),
\end{align*}
\begin{align*}
r^{2}+\tilde{x}_{2}^{2}=r^{2} (1+O(\frac{1}{r})),
\end{align*}
\begin{align*}
\ln(r^{2}+\tilde{x}_{2}^{2})=2\ln r+O(\frac{1}{r}),
\end{align*}
\begin{align*}
\tilde{x}_{2}=-\eta\tilde{d}\ln r-\eta\tilde{c}+O(\frac{1}{r}),
\end{align*}
and hence
\begin{align*}
u(r,\theta) = \sqrt{1-\eta^{2}}\tilde{c}+\sqrt{1-\eta^{2}}\tilde{d}\ln r
+O(\frac{1}{r}).
\end{align*}
This means (\ref{Asymptote2dOld}) is also true for $\sin\theta=0$.

Let $\sqrt{1-\eta^{2}}\tilde{c}:=c$ and $\sqrt{1-\eta^{2}}\tilde{d}:=d$. In
$x$ coordinates, we have
\begin{align*}
u(x_{1},x_{2})  &  =\eta x_{2}+c+d\ln\sqrt{x_{1}^{2}+(1-\eta^{2})x_{2}^{2}}\\
&  +\frac{\eta^{2}d|x|x_{2}}{\sqrt{1-\eta^{2}}(x_{1}^{2}+(1-\eta^{2})x_{2}%
^{2})}\cdot\frac{\ln|x|}{|x|}+O(|x|^{-1}).
\end{align*}
Getting rid of the assumption $a=(0,\eta)$, it is not hard to see that
\begin{align*}
u(x)  &  =a\cdot x+c+d\ln\sqrt{|x|^{2}-(a\cdot x)^{2}}\\
&  +\frac{d|a||x|(a\cdot x)}{\sqrt{1-|a|^{2}}(|x|^{2}-(a\cdot x)^{2})}%
\cdot\frac{\ln|x|}{|x|}+O(|x|^{-1}).
\end{align*}
By the method in Step 4 of Section 6.1, we can improve $O(|x|^{-1})$ to
$O_{k}(|x|^{-1})$. We omit the details.

The remaining task is to compute $d$ in terms of $Res[u]$ and $|a|$. For
simplicity, we still assume $a=(0,\eta)$. Consider the ellipse
\[
E_{\rho}:=\{x_{1}^{2}+(1-\eta^{2})x_{2}^{2}=\rho^{2}\}.
\]
Use the polar coordinates, but this time we set $x_{1}=r\cos\theta$,
$\sqrt{1-\eta^{2}}x_{2}=r\sin\theta$. So $E_{\rho}=\{(r,\theta): r=\rho,
0\leq\theta<2\pi\}$. On $E_{\rho}$:%

\begin{align*}
Du(\theta)=(\frac{d\cos\theta}{\rho}+o(\rho^{-1}),\eta+\frac{d\sqrt{1-\eta
^{2}}\sin\theta}{\rho}+o(\rho^{-1})),
\end{align*}
the unit outward normal vector
\begin{align*}
\vec{n}(\theta)=(\frac{\cos\theta}{\sqrt{1-\eta^{2}\sin^{2}\theta}}%
,\frac{\sqrt{1-\eta^{2}}\sin\theta}{\sqrt{1-\eta^{2}\sin^{2}\theta}})
\end{align*}
and the length element
\begin{align*}
ds=\frac{\sqrt{1-\eta^{2}\sin^{2}\theta}}{\sqrt{1-\eta^{2}}} \rho d\theta.
\end{align*}
So
\begin{align*}
\frac{\partial u/\partial\vec{n}}{\sqrt{1-|Du|^{2}}}=\frac{\eta\sin\theta
}{\sqrt{1-\eta^{2}\sin^{2}\theta}}+\frac{d}{\rho\sqrt{1-\eta^{2}}\sqrt
{1-\eta^{2}\sin^{2}\theta}}+o(\rho^{-1})
\end{align*}
and hence
\begin{align*}
Res[u]  &  =\frac{1}{2\pi}\int_{E_{\rho}}\frac{\partial u/\partial\vec{n}%
}{\sqrt{1-|Du|^{2}}}ds\\
&  =\frac{1}{2\pi}\int_{0}^{2\pi}\frac{\eta\rho\sin\theta}{\sqrt{1-\eta^{2}}%
}d\theta+\frac{1}{2\pi}\int_{0}^{2\pi}\frac{d}{1-\eta^{2}}d\theta+o(1)\\
&  =\frac{d}{1-\eta^{2}}+o(1).
\end{align*}
Letting $\rho\rightarrow+\infty$, we have
\[
d=(1-\eta^{2})Res[u]=(1-|a|^{2})Res[u].
\]

\subsection{Case $|a|>0$\textbf{, }$n\geq3$}

\ \vspace{0.618ex}

We do the same things as above. Assuming $a=(0,\eta)$, make the Lorentz
transformation $L_{-\eta}$: graph of $u\rightarrow$ graph of $\tilde{u}$,
then
\begin{align*}
\tilde{u}(\tilde{x})=\tilde{c}-\tilde{d}|\tilde{x}|^{2-n}+O(|\tilde{x}|^{1-n})
\end{align*}
and
\begin{align*}
L_{\eta}: (\tilde{x}^{\prime},\tilde{x}_{n},\tilde{u}(\tilde{x}^{\prime
},\tilde{x}_{n}))\rightarrow(\tilde{x}^{\prime},\frac{\tilde{x}_{n}+\eta
\tilde{u}}{\sqrt{1-\eta^{2}}},\frac{\eta\tilde{x}_{n}+ \tilde{u}}{\sqrt
{1-\eta^{2}}})=(x^{\prime},x_{n},u(x^{\prime},x_{n})).
\end{align*}
Use the polar coordinates $x^{\prime}=r\cos\theta\xi$, $x_{n}=r\sin\theta$
with $-\frac{\pi}{2}\leq\theta\leq\frac{\pi}{2}$ and $\xi\in S^{n-2}$ the unit
sphere in $\mathbb{R}^{n-1}$. Then we are going to solve $\tilde{x}_{n}$ from
\begin{align*}
\tilde{x}_{n}+\eta\tilde{c} -\eta\tilde{d}(r^{2}\cos^{2}\theta+\tilde{x}%
_{n}^{2})^{\frac{2-n}{2}}+O((r^{2}\cos^{2}\theta+\tilde{x}_{n}^{2}%
)^{\frac{1-n}{2}})=r\sin\theta\sqrt{1-\eta^{2}}.
\end{align*}
Suppose $\sin\theta\neq0$. We have
\begin{align*}
\tilde{x}_{n}=r\sin\theta\sqrt{1-\eta^{2}}(1+O(\frac{1}{r})),
\end{align*}
\begin{align*}
r^{2}\cos^{2}\theta+\tilde{x}_{n}^{2}=r^{2}(1-\eta^{2}\sin^{2}\theta
)(1+O(\frac{1}{r})),
\end{align*}
\begin{align*}
(r^{2}\cos^{2}\theta+\tilde{x}_{n}^{2})^{\frac{2-n}{2}}=r^{2-n}(1-\eta^{2}%
\sin^{2}\theta)^{\frac{2-n}{2}}+O(r^{1-n})
\end{align*}
where $O(r^{1-n})$ is independent of small $\sin\theta$. So
\begin{align*}
\tilde{u}=\tilde{c}-\tilde{d}r^{2-n}(1-\eta^{2}\sin^{2}\theta)^{\frac{2-n}{2}%
}+O(r^{1-n}),
\end{align*}
and
\begin{align*}
\tilde{x}_{n}=r\sin\theta\sqrt{1-\eta^{2}} -\eta\tilde{c} +\eta\tilde{d}
r^{2-n}(1-\eta^{2}\sin^{2}\theta)^{\frac{2-n}{2}}+O(r^{1-n}).
\end{align*}
Therefore, denoting $\sqrt{1-\eta^{2}}\tilde{c}:=c$ and $\sqrt{1-\eta^{2}%
}\tilde{d}:=d$,
\begin{align*}
u(x)  &  = \eta x_{n}+c-d(|x|^{2}-\eta^{2}x_{n}^{2})^{\frac{2-n}{2}%
}+O(|x|^{1-n})\\
&  = a\cdot x+c-d(|x|^{2}-(a\cdot x)^{2})^{\frac{2-n}{2}}+O(|x|^{1-n}).
\end{align*}
One can verify that the above expansion is also true in the case of
$\sin\theta=0$. Also $O(|x|^{1-n})$ can be improved to $O_{k}(|x|^{1-n})$. We
omit the details.

Now we compute $d$. Assume $a=(0,\eta)$ and
\[
E_{\rho}:=\{|x^{\prime2}+(1-\eta^{2})x_{n}^{2}=\rho^{2}\}.
\]
Use the coordinates: $x^{\prime}=r\cos\theta\xi$, $\sqrt{1-\eta^{2}}%
x_{n}=r\sin\theta$. So
\[
E_{\rho}=\{(r,\theta,\xi): r=\rho, -\frac{\pi}{2}\leq\theta\leq\frac{\pi}{2},
\xi\in S^{n-2}\}.
\]
On $E_{\rho}$:%

\begin{align*}
u_{i}=\frac{(n-2)d x_{i}}{r^{n}}+O(r^{-n})\ \ \mbox{for}\ i=1,\cdots,n-1
\end{align*}
and
\begin{align*}
u_{n}=\eta+\frac{(n-2)d (1-\eta^{2})x_{n}}{r^{n}}+O(r^{-n}).
\end{align*}
The unit outward normal vector
\begin{align*}
\vec{n}=(\frac{x_{1}}{r\sqrt{1-\eta^{2}\sin^{2}\theta}},\cdots,\frac{x_{n-1}%
}{r\sqrt{1-\eta^{2}\sin^{2}\theta}},\frac{(1-\eta^{2})x_{n}}{r\sqrt{1-\eta
^{2}\sin^{2}\theta}}).
\end{align*}
The surface element
\begin{align*}
d\sigma=\frac{\sqrt{1-\eta^{2}\sin^{2}\theta}}{\sqrt{1-\eta^{2}}} \rho
^{n-1}\cos^{n-2}\theta d\theta dS^{n-2}.
\end{align*}
So
\begin{align*}
\frac{\partial u/\partial\vec{n}}{\sqrt{1-|Du|^{2}}}=\frac{\eta\sin\theta
}{\sqrt{1-\eta^{2}\sin^{2}\theta}}+\frac{(n-2)d\rho^{1-n}}{\sqrt{1-\eta^{2}%
}\sqrt{1-\eta^{2}\sin^{2}\theta}}+O(\rho^{-n}),
\end{align*}
and hence
\begin{align*}
&  Res[u]=\frac{1}{(n-2)|\partial B_{1}|}\int_{E_{\rho}}\frac{\partial
u/\partial\vec{n}}{\sqrt{1-|Du|^{2}}}d\sigma\\
&  =\frac{|S^{n-2}|}{(n-2)|\partial B_{1}|}\int_{-\frac{\pi}{2}}^{\frac{\pi
}{2}}\frac{\eta\rho^{n-1}\cos^{n-2}\theta\sin\theta}{\sqrt{1-\eta^{2}}}%
+\frac{(n-2)d\cos^{n-2}\theta}{1-\eta^{2}}+O(\rho^{-1})d\theta\\
&  =\frac{d}{1-\eta^{2}}+O(\rho^{-1}).
\end{align*}
We used the fact that
\[
\int_{-\frac{\pi}{2}}^{\frac{\pi}{2}}|S^{n-2}|\cos^{n-2}\theta d\theta
=|S^{n-1}|=|\partial B_{1}|.
\]
Letting $\rho\rightarrow+\infty$, we have
\[
d=(1-\eta^{2})Res[u]=(1-|a|^{2})Res[u].
\]

\section{Exterior Dirichlet problem: proof of Theorem 1.2}

Recall $w_{\lambda}$ is the radially solution defined by (\ref{SolRadial}).
Let $a\in B_{1}$, we use $w_{\lambda}^{a}(x)$ to denote the representation
function of the hypersurface $L_{a}$(graph of $w_{\lambda}$), where the
Lorentz transformation $L_{a}=T_{a}L_{\left\vert a\right\vert }T_{a}^{-1}$ is
defined in the end of Section 2. Then the function $w_{\lambda}^{a}(x)$ has
the following properties: $w_{\lambda}^{a}(0)=0$, $w_{\lambda}^{a}(x)$ solves
equation (1) in $\mathbb{R}^{n}\backslash\{0\}$ and (from the argument in the
previous section or by direct calculation) for $n=2$
\[
w_{\lambda}^{a}(x)=a\cdot x+\sqrt{1-|a|^{2}}m(\lambda)+\sqrt{1-|a|^{2}}%
\lambda\ln\sqrt{|x|^{2}-(a\cdot x)^{2}}+o(1)
\]
and for $n\geq3$
\[
w_{\lambda}^{a}(x)=a\cdot x+\sqrt{1-|a|^{2}}M(\lambda,n)-\frac{\sqrt
{1-|a|^{2}}\lambda}{n-2}(|x|^{2}-(a\cdot x)^{2})^{\frac{2-n}{2}}+o(|x|^{2-n})
\]
as $x\rightarrow\infty$. The numbers $m(\lambda)$ and $M(\lambda,n)$ are from
(\ref{SolRadialAsymp2d}) and (\ref{SolRadialAsymp3d}).

Now we prove Theorem 1.2. We do this in the following two subsections
corresponding to the cases $n=2$ and $n\geq3$ respectively.

\subsection{Case $n=2$}

\ 

\vspace{0.618ex} Let $A$, $g$, $a$ and $d$ be given as in Theorem 1.2 and
$\sqrt{1-|a|^{2}}\lambda=d$. Choose constants $c^{-}\leq0\leq c^{+}$ such that
$w_{\lambda}^{a}(x)+c^{-}\leq g(x)\leq w_{\lambda}^{a}(x)+c^{+}$ on $\partial
A$. We claim that there exists $\breve{R}(A,g,a,d)>0$ such that for any
$R\geq\breve{R}$ there exists a solution $u_{R}$ of maximal surface equation
in $B_{R}\backslash A$ satisfying $u_{R}=g$ on $\partial A$ and $u_{R}%
=w_{\lambda}^{a}$ on $\partial B_{R}$.

In fact, let $\psi$ be an spacelike extension of $g$ into $\mathbb{R}%
^{2}\backslash A$. By Theorem 3.1, there exists $R^{\ast}$ such that
$|\psi(x)-\psi(y)|<|x-y|$ for any $x,y\in\partial B_{R^{\ast}}$ and $x\neq y$.
Assume $|\psi|\leq G$ on $\partial B_{R^{\ast}}$. Let $R\geq\breve{R}>R^{\ast
}$, for any $x\in\partial B_{R^{\ast}}$ and $y\in\partial B_{R}$,
\[
|w_{\lambda}^{a}(y)-\psi(x)|\leq|w_{\lambda}^{a}(y)|+G<\frac{|a|+1}%
{2}(R-R^{\ast})\leq\frac{|a|+1}{2}|x-y|
\]
provided $\breve{R}$ is chosen to be sufficiently large. So we can find a
spacelike function $v_{R}$ on $\partial B_{R}\backslash B_{R^{\ast}}$ such
that $v_{R}=\psi$ on $\partial B_{R^{\ast}}$ and $v_{R}=w_{\lambda}^{a}$ on
$\partial B_{R}$. Define $\Psi_{R}$ by $\Psi_{R}=\psi$ in $B_{R^{\ast}%
}\backslash A$ and $\Psi_{R}=v_{R}$ in $B_{R}\backslash B_{R^{\ast}}$. It is
not difficult to see that $\Psi_{R}$ is a spacelike function defined on
$B_{R}\backslash A$ possessing boundary values $g$ and $w_{\lambda}^{a}$ on
$\partial A$ and $\partial B_{R}$ respectively. Hence by Remark 2.2, we can
get $u_{R}$ by solving the Dirichlet problem.\ The above claim is proved.

By comparison principle, $w_{\lambda}^{a}(x)+c^{-}\leq u_{R}(x)\leq
w_{\lambda}^{a}(x)+c^{+}$ in $B_{R}\backslash A$. Choose any sequence of
$\breve{R}<R_{j}\rightarrow\infty$, by compactness, there exists a subsequence
of $\{u_{R_{j}}\}$ converging to a function $u$ locally uniformly in
$\mathbb{R}^{n}\backslash A$. By Lemma 2.1, $u$ is area maximizing. If $u$ is
not maximal, then graph $u$ contains a segment of light ray and hence the
whole of the ray in $(\mathbb{R}^{n}\backslash A)\times\mathbb{R}$
contradicting the fact $w_{\lambda}^{a}(x)+c^{-}\leq u(x)\leq w_{\lambda}%
^{a}(x)+c^{+}$. Therefore $u$ solves equation (\ref{Ediv}) in $\mathbb{R}%
^{n}\backslash A$. Moreover, $u=g$ on $\partial A$ and
\[
u(x)=a\cdot x+d\ln\sqrt{|x|^{2}-(a\cdot x)^{2}}+O(1)
\]
as $x\rightarrow\infty$.

Finally, we prove the uniqueness of $u$. Suppose there is another such
solution $v$ also satisfying $v=g$ on $\partial A$ and
\[
v(x)=a\cdot x+d\ln\sqrt{|x|^{2}-(a\cdot x)^{2}}+O(1)
\]
Then $w:=u-v$ satisfies a divergence form elliptic equation in $\mathbb{R}%
^{n}\backslash A$, $w=0$ on $\partial A$ and $w$ is bounded. By [GS56, Theorem
7], $w\equiv0$ in $\mathbb{R}^{n}\backslash A$.

\subsection{Case $n\geq3$}

\ 

\vspace{0.618ex} Given $A$, $g$, $a$ and $c$ as in Theorem 1.2, choose
$\tilde{R}$ and $G$ such that $A\in B_{\tilde{R}}$ and $|g|\leq G$ on
$\partial A$. Choose $\lambda^{\ast}>0$ such that $\sqrt{1-|a|^{2}}%
M(\lambda^{\ast},n)\geq|c|+\tilde{R}+G$. Denote
\begin{align*}
&  \Psi^{-}(x):=w_{\lambda^{\ast}}^{a}(x)-\sqrt{1-|a|^{2}}M(\lambda^{\ast
},n)+c,\\
&  \Psi^{+}(x):=w_{-\lambda^{\ast}}^{a}(x)+\sqrt{1-|a|^{2}}M(\lambda^{\ast
},n)+c.
\end{align*}
One can verify that
\begin{align*}
&  \Psi^{-}(x)\leq a\cdot x+c\leq\Psi^{+}(x)\ \ \ \mbox{in}\ \mathbb{R}^{n},\\
&  \Psi^{\pm}(x)=a\cdot x+c+o(1)\ \ \ \mbox{as}\ x\rightarrow\infty,\\
&  \Psi^{-}(x)\leq g\leq\Psi^{+}(x)\ \ \ \mbox{on}\ \partial A.
\end{align*}

For the same reason as in the two dimensional case in the previous subsection,
there exists $\breve{R}$ such that for any $R\geq\breve{R}$ there exists a
solution $u_{R}$ in $B_{R}\backslash A$ satisfying $u_{R}=g$ on $\partial A$
and $u_{R}=a\cdot x +c$ on $\partial B_{R}$. And hence $\Psi^{-}(x)\leq u_{R}
\leq\Psi^{+}(x)$ in $B_{R}\backslash A$. In the same way, we can construct a
solution $u$ in $\mathbb{R}^{n}\backslash A$ satisfying $u=g$ on $\partial A$
and
\begin{align*}
u(x)=a\cdot x +c+o(1)
\end{align*}
as $x\rightarrow\infty$.

The uniqueness of $u$ follows from the comparison principle directly.

\section*{Acknowledgements}

Part of this paper was completed during the first author's visit to University
of Washington (Seattle). His visit was funded by China Scholarship Council.
The second author is partially supported by an NSF grant.

\end{document}